\definecolor{Gray}{gray}{0.9}                            
\theoremstyle{plain} 
\newtheorem{proposition}{Proposition}[section] 
\newtheorem{theorem}[proposition]{Theorem} 
\newtheorem{lemma}[proposition]{Lemma} 
\newtheorem{corollary}[proposition]{Corollary} 
\newtheorem*{maintheorem}{Main Theorem} 
\theoremstyle{definition} 
\newtheorem{definition}[proposition]{Definition} 
\newtheorem{notation}[proposition]{Assumption} 
\newtheorem{example}[proposition]{Example} 
\theoremstyle{remark} 
\newtheorem{remark}[proposition]{Remark}
\renewcommand{\O}{{\mathcal{O}}}
\newcommand{\PP}{{\mathbb{P}}}          
\newcommand{\cA}{{\mathcal{A}}} 
\newcommand{\cB}{{\mathcal{B}}}
\DeclareMathOperator{\Amp}{Amp}
\DeclareMathOperator{\Nef}{Nef}
\DeclareMathOperator{\Eff}{Eff}
\DeclareMathOperator{\Bir}{Bir}
\DeclareMathOperator{\BigCone}{Big}
\DeclareMathOperator{\Aut}{Aut}
\DeclareMathOperator{\Int}{int}
\DeclareMathOperator{\id}{id}
\DeclareMathOperator{\MovC}{\overline{Mov}}
\DeclareMathOperator{\EffC}{\overline{Eff}}
\DeclareMathOperator{\Mov}{Mov}
\numberwithin{equation}{section}
\title[On the numerical dimension of Calabi-Yau 3-folds of Picard number 2]{On the numerical dimension of Calabi-Yau 3-folds of Picard number 2}
\author[M. Hoff]{Michael Hoff} 
\author[I. Stenger]{Isabel Stenger} 
\address{Universit\"at des Saarlandes, Campus E2 4, D-66123 Saarbr\"ucken, Germany}
\email{\href{mailto:hahn@math.uni-sb.de}{hahn@math.uni-sb.de}} 
\email{\href{mailto:stenger@math.uni-sb.de}{stenger@math.uni-sb.de}} 
\date{\today} 
\begin{document}

\begin{abstract} 
We show that for any smooth Calabi-Yau threefold $X$ of Picard number $2$ with infinite birational automorphism group, the numerical dimension $\kappa_\sigma$ of the extremal rays of the movable cone of $X$ is $\frac{3}{2}$.  Furthermore, we provide new examples of Calabi-Yau threefolds of Picard number $2$ with infinite birational automorphism group. 
\end{abstract}

\maketitle

\section{Introduction} 

Given a normal variety $X$ and a  divisor $D$, the asymptotic growth rate of $h^0(X,mD)$ for increasing $m$ plays a fundamental role in birational geometry. Unfortunately, the  Iitaka dimension measuring this growth rate is not invariant under the numerical equivalence relation. To overcome this problem, there are several definitions of a numerical invariant,  the numerical dimension of $D$, which were commonly believed to be equivalent (see \cite{Nak04, Leh13, Eckl16, Fuj20, CP21}). 
However, in \cite{L21}, Lesieutre shows that the different notions of numerical dimension for a pseudoeffective $\mathbb{R}$-divisor do not coincide. In particular, he shows the existence of a pseudoeffective $\mathbb{R}$-divisor whose numerical dimension $\kappa_\sigma$ is not an integer.  
\begin{definition}{(\cite[\S 5]{Nak04})}
	The numerical dimension $\kappa_\sigma(X,D)$ is the largest (real) number $k$ such that for some ample divisor $A$,  one has 
	$$
	\limsup_{m\to \infty} \frac{h^0(X,\lfloor mD\rfloor + A)}{m^k} > 0.
	$$
	If no such $k$ exists, we take $\kappa_\sigma(X,D) = -\infty$.
\end{definition}

\begin{remark}
	We refer to \cite{L21} and the references therein for properties of $\kappa_\sigma$ and different definitions of the numerical dimension.
\end{remark}
In a specific example (first studied in \cite[Section 6]{Og14}),  Lesieutre studies the growth of a boundary divisor of the movable cone of a (general) complete intersection of type $(1,1),(1,1)$ and $(2,2)$ on $\PP^3 \times \PP^3$,  which is a Calabi-Yau threefold of Picard number $2$ whose birational automorphism group is not finite.  The main result of this paper is that Lesieutre's example is not a sporadic one but holds in a more general setting.  
\begin{maintheorem}\label{maintheorem}
	 Let $X$ be a smooth Calabi-Yau threefold with Picard number $\rho(X) = 2$ such that its group of birational automorphisms $\Bir(X)$ is infinite. Let $D$ be an $\mathbb{R}$-divisor on the boundary of the movable cone $\MovC(X)$. Then $\kappa_\sigma(X,D) = \frac{3}{2}$. 
\end{maintheorem}

The original goal of this paper was to generalize Lesieutre's idea to $\mathbb{Q}$-divisors. This cannot be achieved for a Calabi-Yau threefold $X$ as in the main theorem since there are no $\mathbb{Q}$-divisors on the boundary of the movable cone of $X$ (see Remark \ref{remarkOguiso}). Therefore, the question whether the different notions of numerical dimensions are equivalent for pseudoeffective $\mathbb{Q}$-divisors is still open.  Even the recent developments of \cite{CP21} do not answer the question if the problem of the definition of the numerical dimension by Nakayama is in the extension of the definition to $\mathbb{R}$-divisors or in the nature of the definition itself. 

The outline of the proof of our main theorem is as follows. We describe the movable cone of $X$ as well as the fundamental domain for the action of the birational automorphism group on it. This relies on \cite{LP13} which shows that the Cone conjecture is true for Calabi-Yau varieties of Picard number $2$.  Then we compute the numerical dimension of a boundary divisor of the movable cone of $X$ following \cite{L21}.  The idea is to pullback a divisor close to the boundary to the fundamental domain (via a birational automorphism). Then we compute the dimension of the space of global sections for any divisor $D$ in the fundamental domain  on another minimal model of $X$ where the strict transform of $D$ is nef (see Corollary \ref{coverNef}). To perform the computation of the numerical dimension, we use some results of \cite{Kaw88, Kaw97} for Calabi-Yau threefolds. To the end we show that the procedure is stable under round-down for $\mathbb{R}$-divisors. 

We cannot generalize our strategy of the proof of the main theorem to higher Picard number $\rho(X)$ since the Cone conjecture is not known to be true. It is essential in our proof that we can pullback a divisor to a fixed cone in the movable cone of $X$. If $\Bir(X)$ is finite the presented method does not apply. 

Section \ref{examples} is devoted to the construction of new Calabi-Yau threefolds $X$ of Picard number $2$ with infinite birational automorphism group. We construct a fivefold $Z$ of Picard number $2$ isomorphic to the projectivisation of the universal rank $2$ quotient bundle on the Grassmannian $G(2,4)\subset \PP^5$. Our constructed Calabi-Yau threefolds are complete intersections on $Z$.  

The case of Calabi-Yau varieties with infinite birational automorphism group and Picard number greater than $2$ will be treated in a forthcoming paper.

\subsection*{Acknowledgments} We would like to thank Vladimir Lazi\'c for proposing this topic and helpful discussions. We thank Frank-Olaf Schreyer for providing useful feedback on the construction of our examples. 
We thank John Lesieutre for sharing with us the final version of his article.  We also thank the referee for careful reading and detailed suggestions. 
Both authors were supported by the Deutsche Forschungsgemeinschaft (DFG, German Research Foundation) - Project-ID 286237555 - TRR 195.   

\section{Preliminaries and notation}

We follow \cite{LP13} to introduce the basic objects and notation of the article.

\begin{notation}\label{mainassumption}
 Unless otherwise stated, $X$ is a smooth Calabi-Yau threefold (that is, a projective variety of dimension $3$ with trivial canonical bundle and $H^1(X,\O_X) = 0$) with Picard number $\rho(X) = 2$ such that its group of birational automorphisms $\Bir(X)$ is infinite.
\end{notation}

\subsection*{The Cone conjecture for the movable cone}\label{movX}

Let $N^1(X)$ be the N\'eron–Severi group.  We denote by $\Nef(X) \subset N^1(X)_{\mathbb{R}}$ the nef cone,  by $\MovC(X)\subset N^1(X)_{\mathbb{R}}$ the closure of the cone generated by movable  divisors on $X$, and by $\Mov(X)$ its interior. Since $\rho(X)=2$, let $m_1,m_2$ be the two boundary rays of $\MovC(X)$.  Furthermore, let $\Eff(X)$ be the effective cone with closure $\EffC(X)$, the pseudoeffective cone. Finally, we denote by $\cB(X)$ (resp. $\cA(X)$) the group acting on $\MovC(X)$ (resp. $\Nef(X)$) induced by $\Bir(X)$ (resp. $\Aut(X)$). 

\begin{remark}{\cite[Proposition 3.1 and Corollary 2.6]{Og14}}\label{remarkOguiso}
 If one of the extremal rays $m_1$ and $m_2$ is rational, then $\Bir(X)$ is finite. In particular, if $\Bir(X)$ is infinite, then the both rays $m_1$ and $m_2$ are irrational. 
We note that there is no example of a Calabi-Yau variety with Picard number $2$ and finite birational automorphism group such that (at least) one of the boundary divisors of the movable cone is irrational. 
\end{remark}

One of the main results of \cite{LP13} is that the Cone conjecture for $\MovC(X)$ is true. 
\begin{theorem}[{\cite[Thm.\ 4.5]{LP13}}]
Let $X$ be a smooth Calabi-Yau variety with Picard number $2$ and infinite $\Bir(X)$.  There exists a rational polyhedral cone $\Pi$ which is a fundamental domain for the induced action of $\Bir(X)$ on $\MovC(X)\cap \Eff(X)$,  that is, 
$$
\MovC(X)\cap \Eff(X) = \bigcup_{g\in \cB(X)} g\Pi
$$
with $\Int\Pi \cap \Int g \Pi = \emptyset$ unless $g=\id$.
\end{theorem}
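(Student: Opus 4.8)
The plan is to exploit that $\rho(X)=2$, so that all the cones in question lie in the plane $N^1(X)_{\mathbb{R}}\cong{\mathbb{R}}^2$, where a full-dimensional salient cone is determined by its two boundary rays and the lattice automorphisms preserving such a cone are severely restricted. First I would record that $\cB(X)$ is naturally a subgroup of $GL_2(\ZZ)$: a birational self-map of $X$ is an isomorphism in codimension one (as $X$ is a minimal model), so it acts on the rank-two lattice $N^1(X)$ and preserves $\MovC(X)$, $\Eff(X)$ and hence $\EffC(X)$, because strict transforms of movable, resp.\ effective, divisors are again movable, resp.\ effective. Since $\Bir(X)$ is infinite, $\cB(X)$ is an infinite subgroup of $GL_2(\ZZ)$ and therefore contains an element of infinite order; such an element is either parabolic (one invariant ray up to sign) or hyperbolic (two invariant rays, with eigenvalues $\lambda^{\pm1}$, $|\lambda|\neq1$), and a parabolic one cannot preserve the salient full-dimensional cone $\MovC(X)$, as it would have to fix its two distinct boundary rays. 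Hence, replacing it by its square if necessary, $\cB(X)$ contains a hyperbolic element $g_0$ with two positive eigenvalues and two invariant rays $r_+,r_-$, which are irrational (in line with Remark~\ref{remarkOguiso}).

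The crucial step is then to identify $\MovC(X)\cap\Eff(X)$. The two boundary rays of $\MovC(X)$, and likewise the two extremal rays of $\EffC(X)$, are $g_0$-invariant, so each of these pairs equals $\{r_+,r_-\}$; thus $\MovC(X)=\EffC(X)$ is the closed sector spanned by $r_+$ and $r_-$. It follows that $\Mov(X)=\Int\MovC(X)$ is the big cone, hence contained in $\Eff(X)$, while each boundary ray $r_\pm$ is an irrational extremal ray of $\EffC(X)$ and therefore cannot meet $\Eff(X)$ (an extremal ray of $\EffC(X)$ meeting $\Eff(X)$ is spanned by an integral effective class, so rational). Hence $\MovC(X)\cap\Eff(X)=\{0\}\cup\Mov(X)$, an open sector together with its vertex.

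It remains to produce a rational polyhedral fundamental domain for $\cB(X)$ acting on this open sector. I would first observe that the subgroup $\cB^0\subseteq\cB(X)$ fixing each of $r_+,r_-$ has index at most two and is torsion-free (a torsion element would be diagonalisable with rational eigenrays, forced to be $r_\pm$), so $\cB^0=\langle g_0\rangle\cong\ZZ$; consequently $\cB(X)$ is either $\langle g_0\rangle\cong\ZZ$ or is generated by $g_0$ and an element $h$ swapping $r_+\leftrightarrow r_-$, in which case one checks, using $-I\notin\cB(X)$, that $h^2=\id$ and $hg_0h^{-1}=g_0^{-1}$, so $\cB(X)\cong D_\infty$. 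In the cyclic case, choose a rational ray $\rho_0\subset\Mov(X)$ (rational rays are dense in the nonempty open cone $\Mov(X)$) and let $\Pi$ be the closed sector spanned by $\rho_0$ and $g_0^*\rho_0$; it is rational polyhedral because $g_0^*\in GL_2(\ZZ)$, and since $g_0$ moves every ray of the open sector towards $r_+$, the translates $(g_0^*)^n\Pi$ tile $\{0\}\cup\Mov(X)$ with pairwise disjoint interiors. In the dihedral case, $h$ and $g_0^*h$ are reflections, each swapping $r_+\leftrightarrow r_-$ and hence having a rational fixed ray $\rho_h$, resp.\ $\rho_{g_0^*h}$, lying in $\Mov(X)$; taking $\Pi$ to be the rational polyhedral sector spanned by $\rho_h$ and $\rho_{g_0^*h}$ gives the standard fundamental domain of the infinite dihedral group $\langle h,g_0^*h\rangle=\cB(X)$. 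In either case $\bigcup_{g\in\Bir(X)}g^*\Pi=\MovC(X)\cap\Eff(X)$, with interiors of distinct translates disjoint unless $g^*=\id$ (an element of $\Bir(X)$ acting trivially on $N^1(X)$ acts trivially on the cone). I expect the main obstacle to lie in the middle step: recognising that infiniteness of $\Bir(X)$, through a hyperbolic element, forces the boundary rays of $\MovC(X)$ and of $\EffC(X)$ to agree and to be irrational, which is precisely what reduces the Cone conjecture here to elementary one-dimensional arithmetic; a minor technical point is the bookkeeping in the dihedral case that keeps the two walls of $\Pi$ rational.
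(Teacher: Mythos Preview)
Your proposal is correct and follows essentially the same route as the paper's recapitulation (in the proof of Proposition~\ref{prop_NefInFundamental}) of the argument from \cite{LP13}: one shows $\cB(X)\cong\ZZ$ or $D_\infty$, identifies $\MovC(X)\cap\Eff(X)$ with the open cone via irrationality of the hyperbolic eigenrays, and builds $\Pi$ from a rational ray and its $\sigma$-translate in the cyclic case or from the fixed rays of the two generating reflections in the dihedral case. The only cosmetic differences are that the paper works concretely with a boundary ray of $\Nef(X)$ and the averaged vectors $z_1=x+\tau_1 x$, $z_2=z_1+\sigma z_1$ rather than invoking the abstract $GL_2(\ZZ)$ trichotomy, and that it quotes \cite[Thm.~3.9]{LP13} for $\cB^+(X)\cong\ZZ$ where you implicitly use finiteness of the kernel of $\Bir(X)\to\cB(X)$.
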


 We recall how the fundamental domain is constructed depending on the structure of $\cB(X)$. If $g$ is any element of $\cB(X)$, then $\det g = \pm 1$ since $g$ acts on the integral lattice $N^1(X)$.  Let 
$$
\cB^\pm(X) = \{g \in \cB(X)\ | \ \det g = \pm 1\}.
$$
The sets $\cA^\pm(X)$ are defined analogously. We collect some results of \cite{LP13} which are important for us.

\begin{theorem}[{\cite[Thm.\ 3.9, Thm.\ 4.7, Lem.\ 4.4]{LP13}}]\label{propertiesCY}
	Let $X$ be a smooth Calabi-Yau variety with Picard number $2$ and infinite $\Bir(X)$.  Then:
 \begin{itemize}
 \item $\cB^+(X) \cong \mathbb{Z}$.
 \item Let $\sigma\in \cB^+(X)$ be a generator, then the boundary rays of $\Mov(X)$ are spanned by eigenvectors of $\sigma$.  In particular, $\sigma$ is diagonalizable and the eigenvalues are different from one. 
 \item There exists a birational automorphism $\tau\in \cB^-(X)$ such that $\tau^2\in \Aut(X)$, and a birational automorphism of infinite order $\sigma\in \cB^+(X)$ such that 
 $$
  \cB(X) = \langle \cA(X), \sigma, \tau \rangle. 
 $$
 The birational automorphism $\tau$ is possibly the identity, depending if $\cB^-(X)= \emptyset$ or not.
 \item $|\cA(X)|\le 2$, $|\cA^+(X)| =1$ and $\cA^-(X) = \cA^+(X)h$ for any  $h \in \cA^-(X)$. This means that an automorphism of $X$ acts either as the identity on the movable cone or permutes the boundary rays of the nef cone $\Nef(X)$.
 
 \item $\MovC(X) = \EffC(X)$ and $\Mov(X) = \MovC(X) \cap \Eff(X)$.
 \end{itemize}
\end{theorem}

\begin{example}[{\cite[Section 6]{Og14}}]
 A general complete intersection $X$ of type $(1,1)$, $(1,1)$ and $(2,2)$ on $\PP^3 \times \PP^3$ is a Calabi-Yau threefold of Picard number $2$ with infinite birational automorphism group $\Bir(X)$. Furthermore, $\cB^-(X)\neq \emptyset$, the fundamental domain for the action of $\Bir (X)$ on $\MovC(X)$ is the nef cone $\Nef(X)$, and both boundary rays of $\Nef(X)$ are fixed by some birational automorphism in $\cB^-(X)$. 
\end{example}

 This example motivates the following proposition which shows that for a Calabi-Yau threefold as in Assumption \ref{mainassumption}, the properties of the boundary rays of the fundamental domain are the same when $\cB^-(X)\neq \emptyset$.  We assume in the following proposition that the automorphism group acts trivially on the movable cone.

\begin{proposition}\label{prop_NefInFundamental}
Let $X$ be a smooth Calabi-Yau variety with Picard number $2$ and infinite $\Bir(X)$ such that $\cA(X) = \id$. Then:
\begin{enumerate}
 \item [(a)] There exists a fundamental domain $\Pi$ for the action of $\Bir(X)$ on $\MovC(X)$ such that $\Nef(X) \subseteq \Pi$.
 \item [(b)] If  $\cB^-(X)\neq \emptyset$, there exist birational automorphisms $\tau_1$ and $\tau_2$ in $\cB^-(X)$ each fixing a boundary ray of $\Pi$.
\end{enumerate}
\end{proposition}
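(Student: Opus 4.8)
The plan is to use the explicit structure of $\cB(X)$ recalled above, namely $\cB(X) = \langle \cA(X), \sigma, \tau\rangle$ with $\sigma$ of infinite order in $\cB^+(X)\cong\mathbb{Z}$ and (when $\cB^-(X)\neq\emptyset$) an involution-like element $\tau$ with $\tau^2\in\Aut(X)$. Since $\rho(X)=2$, everything takes place in the two-dimensional cone $\MovC(X)$, so the action is essentially an action on an interval of rays, and I can reason purely in terms of how the generators permute the boundary rays.

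\textbf{Part (a).} First I would recall from \cite{LP13} that $\MovC(X)$ decomposes into the chambers $g^*\Nef(X)$ for $g\in\cB(X)/\cA(X)$ (the nef chambers of the various minimal models), and that $\Nef(X)$ is itself a rational polyhedral cone since $\cA(X)$ acting on $\Nef(X)$ satisfies the (stronger, classical) Cone conjecture with $\Nef(X)$ already rational polyhedral up to the $\cA(X)$-action; in the $\rho=2$ case $\cA(X)$ is finite or virtually cyclic and $\Nef(X)$ has two rational boundary rays. Since $\cB^+(X)\cong\mathbb{Z}$ is generated by $\sigma$, the chambers $\{(\sigma^n)^*\Nef(X)\}_{n\in\mathbb{Z}}$ tile $\MovC(X)$ and accumulate only at the two (irrational) boundary rays $m_1,m_2$ of $\MovC(X)$. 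I would then take $\Pi$ to be the union of finitely many consecutive chambers containing $\Nef(X)$; concretely, if $\cB^-(X)=\emptyset$ one can simply take $\Pi=\overline{\bigcup_{g\in\cA(X)\sigma^{\mathbb{Z}}/\sim}}$ a single fundamental strip, arranged so that $\Nef(X)$ is one of its constituent chambers, which is rational polyhedral and meets each $\cB(X)$-translate only along boundaries. When $\cB^-(X)\neq\emptyset$ the element $\tau$ acts as a reflection swapping the two ends, so a fundamental domain is "half" of a $\sigma$-strip; I would choose the strip symmetrically about $\Nef(X)$ (using that $\tau$ normalizes things appropriately, after replacing $\tau$ by $\tau\circ a$ for a suitable $a\in\cA(X)$) so that $\Nef(X)\subseteq\Pi$ still holds. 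The verification that this $\Pi$ is a genuine fundamental domain is then just bookkeeping with the tiling.

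\textbf{Part (b).} Assume $\cB^-(X)\neq\emptyset$ and let $\tau\in\cB^-(X)$ be as above. Since $\det\tau=-1$ and $\tau$ preserves $\MovC(X)$ and its two boundary rays $\{m_1,m_2\}$, the induced linear map on the pencil of rays either swaps $m_1\leftrightarrow m_2$ or fixes each. If it fixes each ray, then $\tau$ (being orientation-reversing on the two-dimensional space but fixing two distinct lines) is diagonalizable with eigenvalues $1,-1$, hence $\tau^2=\id$; but then $\tau$ would need a rational eigenline unless its fixed lines are irrational — and indeed $m_1,m_2$ being irrational, $\tau$ is a non-trivial involution fixing both boundary rays. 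In that case $\tau$ itself fixes the two boundary rays of $\Pi$? No — here I must be careful: $\tau$ fixes the boundary rays of $\MovC(X)$, not of $\Pi$. So instead I would argue as follows: the reflection structure means $\tau$ interchanges the two ends of the $\sigma$-strip, and conjugating $\tau$ by appropriate powers of $\sigma$ produces elements $\tau_1=\sigma^{n_1}\tau$ and $\tau_2=\sigma^{n_2}\tau$ in $\cB^-(X)$ (still with $\det=-1$, and $\tau_i^2\in\Aut(X)$ since $\sigma^{n}\tau\sigma^{n}\tau=\sigma^n(\tau\sigma^n\tau^{-1})\tau^2$ and $\tau$ conjugates $\sigma$ to $\sigma^{-1}$ inside $\cB^+(X)\cong\mathbb{Z}$, giving $\tau_i^2=\sigma^{n}\sigma^{-n}\tau^2=\tau^2\in\Aut(X)$), such that $\tau_i$ maps $\Pi$ to the adjacent fundamental strip sharing the boundary ray $b_i$ of $\Pi$, forcing $\tau_i$ to fix $b_i$. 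Choosing $n_1,n_2$ so that $\tau_1,\tau_2$ are the reflections at the two walls of $\Pi$ gives the claim.

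\textbf{Main obstacle.} The delicate point is part (b): pinning down that the wall-reflections $\tau_1,\tau_2$ genuinely lie in $\cB(X)$ with the required property $\tau_i^2\in\Aut(X)$, and simultaneously that the fundamental domain $\Pi$ from (a) can be chosen \emph{compatibly} with these reflections (i.e.\ symmetric enough that $\tau_i$ maps $\Pi$ onto a neighbouring copy rather than something skew). This requires using that $\tau$ conjugates the generator $\sigma$ of $\cB^+(X)\cong\mathbb{Z}$ to $\sigma^{\pm1}$ — which follows because $\cB^+(X)$ is normal in $\cB(X)$ of index at most $2$ and $\mathbb{Z}$ has only $\pm1$ as automorphisms — and then a short computation in the resulting infinite dihedral quotient $\cB(X)/\cA(X)$ identifies the two conjugacy classes of reflections, one per wall of $\Pi$. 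I would present this group-theoretic core carefully and leave the convex-geometry verifications (that the chambers tile, that $\Pi$ is rational polyhedral) as routine consequences of \cite[Thm.\ 4.5]{LP13} and $\rho(X)=2$.
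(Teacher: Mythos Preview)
Your proposal has a genuine gap in part (a): the claim that $\MovC(X)$ decomposes into the chambers $g^*\Nef(X)$ for $g\in\cB(X)/\cA(X)$, and in particular that the cones $\{(\sigma^n)^*\Nef(X)\}_{n\in\mathbb{Z}}$ tile $\MovC(X)$, is not correct in general. The chamber decomposition of $\MovC(X)$ is by the pullbacks $\phi^*\Nef(X')$ ranging over \emph{all} birational maps $\phi\colon X \dashrightarrow X'$ to minimal models $X'$, and not every such $X'$ is isomorphic to $X$. So the cones $(\sigma^n)^*\Nef(X)$ need not be adjacent; there can be other nef chambers in between, coming from minimal models birational but not isomorphic to $X$. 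In fact the paper establishes this finer covering of $\Pi$ only \emph{after} the present proposition (Corollary~\ref{coverNef}), using the proposition as input, so your argument is circular as it stands.

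The paper avoids this by constructing $\Pi$ purely from the group structure, with no reference to nef chambers. When $\cB^-(X)=\emptyset$, it picks a boundary ray $x$ of $\Nef(X)$ and sets $\Pi = \mathbb{R}_{\geq 0}x + \mathbb{R}_{\geq 0}\sigma x$; a short continuity argument shows $\sigma x\notin\Amp(X)$, whence $\Nef(X)\subseteq\Pi$ after possibly swapping $\sigma$ and $\sigma^{-1}$. When $\cB^-(X)\neq\emptyset$, given $\tau_1\in\cB^-(X)$ and $\tau_2:=\sigma\tau_1$ (so $\tau_1^2=\tau_2^2=\id$ in $\cB(X)$), the boundary rays of $\Pi$ are \emph{defined} as the $+1$-eigenvectors $z_1=x+\tau_1 x$ and $z_2=z_1+\sigma z_1$ of $\tau_1$ and $\tau_2$; thus part (b) is built into the construction rather than deduced afterwards. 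One then checks that every $\sigma^k z_i$ is fixed by the conjugate $\sigma^k\tau_i\sigma^{-k}\in\cB^-(X)$, hence lies outside $\Amp(X)$, so some translate of $\Pi$ contains $\Nef(X)$. Your infinite-dihedral picture in (b) is correct in spirit and matches the paper's $\tau_2=\sigma\tau_1$, but it should be the \emph{starting point} for building $\Pi$, not a consequence of a nef-chamber tiling you do not yet have.
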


\begin{proof}
First, we recapitulate the construction of the fundamental domain $\Pi$ given in \cite[Theorem 4.5]{LP13} and show afterwards that we can always move $\Pi$ in $\MovC(X)$ around, to obtain $\Nef(X) \subseteq \Pi$. 
	
We start with the case $\cB^{-}(X)= \emptyset$. Let $\sigma$ be  a generator of $\cB(X) \cong \cB^+(X) \cong \mathbb{Z}$. By {\cite[Theorem 2.4]{LP13}}, for any $y \in \Amp(X)$, we have $\sigma y \notin \Amp(X)$ as $\sigma$ is not an automorphism of $X$. We show now that for any divisor $x$ on the boundary of $\Nef(X)$, the divisors $\sigma x$ and $\sigma^{-1}x$ are not contained in $\Amp(X)$. Let $(x_k)_k \in \Amp(X)$ a sequence converging to $x$ on the boundary of $\Nef(X)$. Then the sequence $(\sigma x_k)_k$ converges to $\sigma x$ and the claims follows as $\sigma x_k \notin \Amp(X)$ for each $k$. The same holds for $\sigma ^{-1}$.
Thus, after replacing $\sigma$ by $\sigma^{-1}$ if necessary, we may assume that for an element $x$ in one of the two boundary rays of $\Nef(X)$, the cone   
$$
\Pi = \mathbb{R}_{\geq 0}x + \mathbb{R}_{\geq 0}\sigma x,
$$ 
contains $\Nef(X)$. Furthermore,  $\Pi$ and $\sigma\Pi$ have a non-overlapping interior and  

$$ \text{Mov}(X) = \MovC(X) \cap \text{Eff}(X) = \bigcup_{n \in \mathbb{Z}} \sigma^n \Pi$$ as
$\sigma^nx$ converges (after scaling) to the boundary rays $m_1$ and $m_2$ of $\MovC(X)$ for $n \rightarrow \pm \infty$.  This follows from the fact that the boundary rays are spanned by eigenvectors of $\sigma$ (see \cite{Bir67} and also \cite[Lemma 2]{L21}). 

Next let us assume that $ \cB^-(X) \neq \emptyset$.  Let $\sigma$ be a generator of 
$\cB^+(X)$ and choose an element $\tau_1 \in \cB^-(X)$. Note that $\cB^-(X) = \cB^+(X) \tau_1$ by \cite[Lemma 3.2]{LP13}.  Let $\tau_2 = \sigma \tau_1 \in \cB^-(X)$. Now, as $\tau_1^2 = \tau_2^2 = \text{id}$, we have
\begin{equation}
\sigma = \tau_2 \tau_1  \text{ and } \tau_1 = \tau_2 \sigma.
\end{equation}
Let $x \in \text{Amp}(X)$  be an integral class and consider the elements
$$ z_1 = x + \tau_1x \text{ and } z_2 = z_1 + \sigma z_1.$$
Then $z_1$ is an eigenvector of $\tau_1$ to the eigenvalue 1 and thus, up to a scalar, independent of the choice of $x$. Moreover, as $\sigma z_1 = \tau_2 z_1$ by the choice of $\tau_2$, the divisor $z_2$ is an eigenvector of $\tau_2$ to the eigenvalue 1. Thus, 
as for each $i$ we have $\tau_iz_i = z_i$ and $\tau_i$ is not automorphism,  \cite[Lemma 2.4]{LP13} implies that $z_i$ is either outside of $\Nef(X)$ or on the boundary.  For $i = 1,2$ and $k \in \mathbb{Z}$ we set 
$$\tau_{k,i} = \sigma^k\tau_i\sigma^{-k} \in \cB^{-}(X).$$
Then
$$ \tau_{k,i}(\sigma^k z_i) =
\sigma^k\tau_i\sigma^{-k} (\sigma^k z_i) = \sigma^k(\tau_iz_i) =
 \sigma^k z_i$$
 which implies, with the same argument as above,  that $\sigma^k z_i \notin \Amp(X)$ for $i = 1,2$ and all $k \in \mathbb{Z}$. 
Now, let 
$$\Pi = \mathbb{R}_{\geq 0}z_1 + \mathbb{R}_{\geq 0}z_2$$
and 
$$ \Pi \cup \tau_2 \Pi = \mathbb{R}_{\geq 0}z_1 + \mathbb{R}_{\geq 0} \sigma z_1.$$ Then, as in the case $\cB^{-}(X) = \emptyset$, we obtain
\begin{equation}\label{eq_fundamentalcone}
\text{Mov}(X) = \bigcup_{n \in \mathbb{Z}} \sigma^n(\Pi \cup \tau_2 \Pi ) = \bigcup_{g \in \cB(X)} g\Pi.
\end{equation}
The fact that $\Pi$ and $g\Pi$ have disjoint interior for $g\in \cB(X)$ follows exactly as in \cite[Theorem 4.5]{LP13}. 

Finally, we show that we can choose $\Pi$ so that $\Nef(X) \subseteq \Pi$. 
As $\Nef(X) \subseteq \Mov(X)$,  there exists a $k \in \mathbb{Z}$ such that 
$$ \sigma^k (\Pi \cup \tau_2\Pi) \cap \Nef(X) \neq \emptyset.$$
But the boundary rays $\sigma^k z_1$, $\sigma^k z_2$ and $\sigma^{k+1} z_1$ of the cones $\sigma^k\Pi$ and $\sigma^k\tau_2\Pi$ are all not contained in $\Amp(X)$. This implies either $ \Nef(X) \subseteq \sigma^k\Pi$ or $\Nef(X) \subseteq \sigma^k\tau_2\Pi$.  Now, swapping $\Pi$ and $\tau_2 \Pi$ if necessary, we obtain the desired claim by taking $\sigma^k \Pi$ as the new fundamental domain. 
\end{proof}
Visualizing the results of the proof of Proposition \ref{prop_NefInFundamental}, we obtain:
\begin{figure}[ht]
\includegraphics[scale=1]{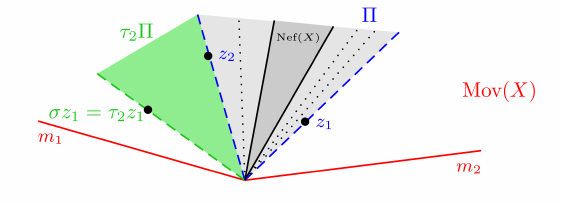} 
\caption{The movable cone $\MovC(X)$ is spanned by the rays $m_1$ and $m_2$ which are the positive eigenspaces of $\sigma$ corresponding to eigenvalues $\lambda > 0$ and $\lambda^{-1}$ (as $\det(\sigma) = 1$). In the picture, we choose $\sigma$ so that $\sigma ^n x$ for $n \rightarrow \infty$ converges (after scaling) to the ray $m_1$. The fundamental domain $\Pi$ contains the nef cone of $X$ and the green cone $\tau_2\Pi$ only exists for $\cB^-(X)\neq \emptyset$.}
\label{pictureCone}
\end{figure}

Next, we describe the fundamental domain for the action of the birational automorphism group on the movable cone in the case $\cA(X) \neq \id$. Recall that this implies $|\cA(X)| = 2$ by Theorem \ref{propertiesCY}.
	\begin{proposition}\label{prop_NefInFundamentalAut}
		Let $X$ be a smooth Calabi-Yau variety with Picard number $2$ and infinite $\Bir(X)$ such that $|\cA(X)| = 2$. 
		\begin{enumerate}
			\item[(a)] There exists a fundamental domain $\Pi'$ for the action of $\Bir(X)$ on $\MovC(X)$ and birational automorphisms $\tau_1 \in \cA^-(X)$ and $\tau_2 \in  \cB^-(X)$ each  fixing a boundary ray of $\Pi'$. 
			\item[(b)] There exists a cone $\Pi \subset \Mov(X)$ such that $\Nef(X) \subset \Pi$ and
			\begin{equation}\label{eq_coverMovAut}
			\Mov(X) = \bigcup_{n \in \mathbb{Z}} \sigma^n \Pi,
			\end{equation}
			where $\sigma$ is a generator of $\cB^+(X)$. Moreover, each boundary ray of $\Pi$ is fixed by an element of $\cB^-(X)$. 
		\end{enumerate}
	\end{proposition}
\begin{proof} We remark that the proof is similar to the proof of Proposition \ref{prop_NefInFundamental}. We only point out the necessary changes.  \\
First note that $|\cA(X)| = 2$ implies that $\cB^-(X) \neq \emptyset$ by Theorem \ref{propertiesCY}. So let $\tau_1 \in \cA(X)^-$ be the non-trivial automorphism permuting the boundary rays of $\Nef(X)$ and let $\sigma \in \cB^+(X)$ be a generator. Then $\tau_2 = \sigma \tau_1 \in \cB^-(X)$ is a non-trivial birational automorphism. Now we can run through the proof of Proposition \ref{prop_NefInFundamental} with the chosen  $\tau_1$, $\tau_2$ and $\sigma$. As before, we obtain classes $z_1$ and $z_2$ fixed by $\tau_1$ and $\tau_2$ spanning a rational polyhedral cone $\Pi'$ which is a fundamental domain for $\Bir(X)$ acting on $\MovC(X)$. However, in contrast to Proposition \ref{prop_NefInFundamental}, the ray $z_1$ is the only boundary ray which is contained in $\Amp(X)$.  	Hence, $\Nef(X)$ and $\Pi'$ have a non-empty intersection. 
	We have
$$
\bigcup_{n \in \mathbb{Z}} \sigma^n(\Pi' \cup \tau_2 \Pi' )  = \Mov(X) = \bigcup_{n \in \mathbb{Z}} \sigma^n(\Pi' \cup \tau_1 \Pi' )  
$$
where the cone $\Pi' \cup \tau_2 \Pi'$ is spanned by $z_1$ and $\sigma z_1$ and $\Pi' \cup \tau_1 \Pi'$ by $z_2$ and $\sigma^{-1}z_2$. Setting $\Pi := \Pi' \cup \tau_1 \Pi'$ we obtain Equation \eqref{eq_coverMovAut}.
Moreover,  $\Nef(X)$ is contained in the cone $\Pi$ by construction. 
\end{proof}
For Proposition \ref{prop_NefInFundamentalAut} we obtain a similar picture as Figure \ref{pictureCone} by moving the ray spanned by $z_1$ to the interior of the cone $\Nef(X)$ and substituting $\Pi$ with $\Pi'$.

\subsection*{Covering of the fundamental domain by nef cones}

Let $X$ be a smooth Calabi-Yau threefold and let $\phi:X \dasharrow X'$ be a birational map to another smooth Calabi-Yau threefold. Since $X$ and $X'$ are both minimal, the birational map $\phi$ is an isomorphism in codimension $1$, 
%(see e.g. \cite[Lem. (3.4)]{Han87})
and hence, induces isomorphisms 
\begin{align*}
 N^1(X)  &\cong N^1(X'), \\
 \overline{\textrm{Mov}}(X) & \cong \overline{\textrm{Mov}}(X'), \\
 \Bir (X) &\cong \Bir(X'),
\end{align*}
%by \cite[\S II, Prop. 6.5 (b)]{hartshorne-ag}, \cite[Rem. 3.6 (2)]{BF21} and 
where the last isomorphism follows from \cite[Thm. (3.7) 2.]{Han87} and is induced by conjugation with $\phi$. 

For our Calabi-Yau threefold $X$ one can cover the fundamental domain by birational pullbacks of nef cones of minimal models of $X$. Due to results of Kawamata we only have to consider \emph{small contractions} (or \emph{$D$-flops}) in the following sense.

\begin{definition}[{\cite[Definition 1.8]{Kaw97}}]
 Let $X$ be a smooth Calabi-Yau threefold, and let $D$ be an effective divisor on $X$ which is not nef. Then for a sufficiently small $\epsilon>0$ the pair $(X,\epsilon D)$ is log terminal and there exists a contraction morphism $f\colon X  \rightarrow Y$ contracting a $(K_X+\epsilon D)$-negative extremal ray $R$.
 As $K_X=0$ is nef, $f$ is a primitive birational contraction morphism. If $f$ only contracts curves we call it a \emph{small contraction}. We  call the log flip of $f$ a \emph{$D$-flop}.
  
\end{definition}

In the rest of the article, we  consider mainly birational maps $\phi: X\dasharrow X'$ which are composition of $D$-flops, and thus, isomorphisms in codimension 1.

\begin{theorem}[{\cite[Thm. 2.3]{Kaw97}}, {\cite[Thm 3.5]{Kol89}}]\label{thm_Dflops} Let $X$ be a Calabi-Yau threefold, and let $D \in \MovC(X) \cap \Eff(X)$. 
Then there exists a finite sequence of $D$-flops such that the strict transform of D becomes nef (on a different minimal model of $X$).  
\end{theorem}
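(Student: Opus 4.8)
The plan is to run a minimal model program for the klt pair $(X,\epsilon D)$ with $\epsilon>0$ sufficiently small: since $K_X\equiv 0$, every step of this program is simultaneously a step of the ``$D$-MMP'', and the movability of $D$ will force each step to be a small contraction, hence a $D$-flop in the sense of the definition above.

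If $D$ is already nef there is nothing to prove, so assume it is not. For $\epsilon>0$ small the pair $(X,\epsilon D)$ is log terminal, and by the Cone Theorem there is a $(K_X+\epsilon D)$-negative extremal ray $R$; because $K_X\equiv 0$ this says exactly that $D\cdot R<0$. Let $f\colon X\to Y$ be the contraction of $R$. First I would check that $f$ is small: if $f$ contracted a prime divisor $E$, then $E$ would be covered by curves $C\in R$ with $D\cdot C<0$, and restricting a general effective member of $|mD|$ to such a general curve shows that $E$ is forced into the support of every effective divisor linearly equivalent to $mD$, so $E$ would be a divisorial component of the stable base locus of $D$, contradicting $D\in\MovC(X)$. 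Hence $f$ is a flopping contraction, and since $\dim X=3$ the $D$-flop $f^+\colon X^+\to Y$ exists. By \cite[Thm.\ 3.5]{Kol89} the variety $X^+$ is again smooth; it has trivial canonical class since a flop is crepant, and $H^1(X^+,\O_{X^+})=0$ because $X^+$ and $X$ are isomorphic in codimension one. Thus $X^+$ is again a smooth Calabi--Yau threefold, the strict transform $D^+$ of $D$ still lies in $\MovC(X^+)\cap\Eff(X^+)$, and the flopped ray is now $D^+$-positive. I would then replace $(X,D)$ by $(X^+,D^+)$ and iterate.

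The remaining, and only non-formal, point is termination: one must show that this sequence of $D$-flops is finite. I would first observe that the program cannot stop at a Mori fibre space $g\colon X'\to S$, since then $-(K_{X'}+\epsilon D')\equiv -\epsilon D'$ would be $g$-ample, giving a curve $C$ contracted by $g$ with $D'\cdot C<0$ and again contradicting $D'\in\MovC(X')$. So it suffices to know that a sequence of flops between terminal threefolds terminates; this is precisely the content of the results of Kawamata and Koll\'ar quoted in the statement (\cite[Thm.\ 2.3]{Kaw97}, \cite[Thm.\ 3.5]{Kol89}), which I would simply invoke. Granting termination, the $(X,\epsilon D)$-MMP stops after finitely many $D$-flops at a minimal model on which the strict transform of $D$ is nef, as desired. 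The hard part is thus entirely concentrated in the termination statement; everything else is a routine combination of the Cone Theorem with the characterisation of movable divisors via their stable base locus.
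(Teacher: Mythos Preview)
The paper does not give its own proof of this theorem: it is stated as a citation of \cite[Thm.\ 2.3]{Kaw97} and \cite[Thm.\ 3.5]{Kol89} and used as a black box. Your sketch is therefore not competing with anything in the paper; it is essentially an outline of how the cited result of Kawamata is proved, and as such it is correct.

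Two small comments on the write-up. First, your exclusion of a Mori fibre space is fine, but there is a one-line alternative: since $K_X\equiv 0$ and $D\in\EffC(X)$, the class $K_X+\epsilon D$ is pseudoeffective, so no run of the $(K_X+\epsilon D)$-MMP can end in a Mori fibre space. Second, your phrase ``termination of flops between terminal threefolds'' is slightly misleading. What terminates here is the sequence of $(K_X+\epsilon D)$-\emph{log flips}: each $D$-flop is $K_X$-trivial but strictly $(K_X+\epsilon D)$-negative on the contracted ray, so you are really invoking termination of threefold klt log flips (Shokurov/Kawamata), which is what \cite[Thm.\ 2.3]{Kaw97} packages. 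Termination of genuine $K$-trivial flops, without an auxiliary boundary, is a different and harder statement, and you do not need it here.
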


As a consequence we obtain the following result:

\begin{corollary}\label{coverNef} Let $X$ be a Calabi-Yau threefold of Picard number $2$ with infinite birational automorphism group. Let $\Pi$ be the fundamental domain for the action of $\Bir(X)$ on $\MovC(X)$ from Proposition \ref{prop_NefInFundamental} in the case $\cA(X) = \id$ or the cone $\Pi$ from Proposition \ref{prop_NefInFundamentalAut} otherwise. Then there exist finitely many Calabi-Yau threefolds $X_1,\ldots,X_r$ and birational maps $\phi_i\colon X\dashrightarrow X_i$ which are isomorphisms in codimension 1 such that
 $$
 \Pi = \bigcup_{i=1}^r \phi_i^*(\Nef(X_i)).
 $$
 Each $\phi_i$ is a composition of $D$-flops.
\end{corollary}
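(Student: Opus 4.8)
The plan is to realise $\Pi$ as the union of the nef cones of the finitely many ``marked minimal models'' of $X$ whose nef cone lies inside $\Pi$. Since $\Pi\subseteq\MovC(X)\cap\Eff(X)$, Theorem \ref{thm_Dflops} applies to every class $D\in\Pi$ and produces a (smooth) Calabi-Yau threefold $X_D$ together with a birational map $\phi_D\colon X\dashrightarrow X_D$ that is a composition of $D$-flops, hence an isomorphism in codimension $1$, such that the strict transform of $D$ is nef on $X_D$; equivalently $D\in\phi_D^*(\Nef(X_D))$. Thus the cones $\phi_D^*(\Nef(X_D))$ cover $\Pi$, and the whole point is to extract a finite subcover whose members are contained in $\Pi$.

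I would then record two structural facts about this covering. First, any two distinct marked minimal models $(X',\phi')$ and $(X'',\phi'')$ — meaning $\phi',\phi''$ are isomorphisms in codimension $1$ from $X$ for which the induced birational map $X'\dashrightarrow X''$ is not biregular — have $\phi'^*(\Nef(X'))$ and $\phi''^*(\Nef(X''))$ with disjoint interiors, since a common interior class would be pulled back to an ample class on both $X'$ and $X''$, forcing that birational map to be an isomorphism. Second — and here one has to unpack the construction of $\Pi$ in Proposition \ref{prop_NefInFundamental} — both boundary rays of $\Pi$ are walls of the nef-chamber decomposition of $\MovC(X)$: when $\cB^-(X)=\emptyset$ one boundary ray of $\Pi$ is a boundary ray of $\Nef(X)$ and the other is its image under $\sigma$, while the analysis for $\cB^-(X)\neq\emptyset$ is similar, using the involutions $\tau_1,\tau_2$ of Proposition \ref{prop_NefInFundamental}; each such ray is non-nef and lies in the interior of $\MovC(X)$, because the extremal rays $m_1,m_2$ of $\MovC(X)$ are irrational (Remark \ref{remarkOguiso}), hence is a flopping wall. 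Since $\rho(X)=2$, these two facts force every chamber $\phi^*(\Nef(X'))$ meeting $\Int\Pi$ to be contained in $\Pi$, and the chambers contained in $\Pi$ then tile it.

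It remains to see that only finitely many marked minimal models have their nef cone contained in $\Pi$. The essential input is the Cone conjecture for $\MovC(X)$ from \cite{LP13}: $\Bir(X)$ acts on the set of marked minimal models of $X$ with only finitely many orbits. If $C=\phi^*(\Nef(X'))\subseteq\Pi$ and $g\in\Bir(X)$ satisfies $g^*C\subseteq\Pi$, then the full-dimensional cone $C$ lies in $\Pi\cap(g^{-1})^*\Pi$, which has empty interior unless $g^*=\id$ because $\Pi$ is a fundamental domain; hence each orbit contributes at most one chamber inside $\Pi$, so there are finitely many, say $\phi_i^*(\Nef(X_i))$ for $i=1,\dots,r$. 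By the previous paragraph these finitely many cones cover $\Pi$, which gives $\Pi=\bigcup_{i=1}^r\phi_i^*(\Nef(X_i))$; applying Theorem \ref{thm_Dflops} to a class in the interior of $\phi_i^*(\Nef(X_i))$ and invoking the disjointness of chamber interiors identifies $\phi_i$, up to isomorphism, with a composition of $D$-flops, as required.

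The step I expect to be the main obstacle is the finiteness: a priori the nef cones of the (infinitely many) marked minimal models could accumulate inside $\Pi$ — which is exactly what happens at the irrational boundary rays $m_1,m_2$ of $\MovC(X)$ — and ruling this out in the interior is precisely what the Cone conjecture of \cite{LP13} buys us, which is also why the argument does not extend to Picard number $\geq 3$. A secondary but genuine point is verifying that the fundamental domain provided by Proposition \ref{prop_NefInFundamental} has its boundary supported on walls, so that the finitely many covering chambers fit exactly inside $\Pi$ rather than merely covering a neighbourhood of it.
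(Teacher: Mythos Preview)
Your proposal is correct and follows the same overall outline as the paper: cover $\Pi$ by pullbacks of nef cones via Theorem~\ref{thm_Dflops}, argue that the two boundary rays of $\Pi$ cannot lie in the interior of any such chamber (using the $\tau_i$ when $\cB^-(X)\neq\emptyset$ and $\sigma$ when $\cB^-(X)=\emptyset$), and then establish finiteness.

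The one genuine difference is in the finiteness step. The paper simply invokes \cite[Theorem~2.6]{Kaw97}, which gives local finiteness of the nef-chamber decomposition inside $\BigCone(X)$; since $\Pi\subseteq\Mov(X)=\BigCone(X)$, finiteness is immediate. You instead route through the Cone conjecture of \cite{LP13}: finitely many $\Bir(X)$-orbits of marked minimal models, together with your observation that each orbit contributes at most one chamber to the fundamental domain $\Pi$. Both arguments are valid; yours is pleasantly self-contained given that \cite{LP13} is already the backbone of the section, while the paper's is shorter because Kawamata's theorem does the work in one line without needing the orbit-counting step. A small point worth making explicit in your write-up is that the equivalence ``rational polyhedral fundamental domain $\Leftrightarrow$ finitely many orbits of chambers'' itself relies on knowing the chambers are rational polyhedral, which again comes from Kawamata's work --- so the two approaches are closer than they first appear. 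For the boundary-ray subcase $\cB^-(X)=\emptyset$, the paper makes the contradiction precise via \cite[Lemma~1.7]{HK00}; you may want to cite the analogous fact explicitly rather than leaving it implicit in ``flopping wall''.
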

The corollary follows by results of \cite{Kaw97} but for the convenience of the reader we give a proof using the structure of the fundamental domain $\Pi$ as constructed in Proposition \ref{prop_NefInFundamental}.

\begin{proof}
	Let $D \in \Pi$. By Theorem \ref{thm_Dflops} there exists a minimal model $X_D$ and a sequence of $D$-flops $\phi_D\colon X \dashrightarrow X_D$ such that $(\phi_D)_\ast D $ is nef on $X_D$.  Note that $X_D$ has the same analytic type of singularities as $X$ by 
	\cite[Theorem 2.4]{Kol89}.
	Thus,
	$$
	\Pi \subseteq \bigcup_{D \in \Pi} \phi_D^\ast \Nef(X_D).$$ 
	To show equality, we just have to show that none of the boundary divisors of $\Pi$ is contained in the interior of some $\phi_D^\ast \Nef(X_D)$. 
	
    For the case $\cB^{-}(X) \neq \emptyset$ we recall from the proof of Proposition \ref{prop_NefInFundamental} (resp. Proposition \ref{prop_NefInFundamentalAut}) that each boundary divisor of $\Pi$ is fixed by a birational automorphism in $ \cB^-(X)$.  We prove it here for the case $\cA(X) = \id$. The other case is shown in the same way.
	Assume that there is a birational map $\phi$ as above and minimal model $X'$ of $X$ such that $\phi_\ast z_i  = R' \in \Amp(X')$ for one $i$.  
$$
 \begin{xy}
  \xymatrix{
   X \ar@{-->}[rr]^{\tau_i} \ar@{-->}[d]_{\phi} && X \ar@{-->}[d]^{\phi} \\
   X' \ar@{-->}[rr]^{\tau_i':=\phi\circ \tau_i \circ \phi^{-1}} && X'
  }
 \end{xy}
$$	
	Now as $\tau_i z_i = z_i$ we get $(\tau_i')_\ast R' = R' \in \Amp(X')$. But this implies that $\tau_i'$ is an automorphism which is a contradiction to $\Bir (X) \cong \Bir(X')$ (where this isomorphism is given by conjugation with $\phi$).  
	
	Next let us assume that $\cB^-(X) = \emptyset$ and thus $\cA(X)=\id$. As above, let $z_1$ and $z_2$ be the generators of $\Pi$. 
	Note that by construction $z_1$  is a boundary divisor of $\Nef(X)$ and $z_2 = \sigma z_1$, where $\sigma$ generates $\cB^+(X)$.  Assume there exists an ample divisor $R'$ on a minimal model $X'$ of $X$ such that $\phi^*(R') = z_2$. But $\sigma^{-1}(\phi^*(R')) \in \Nef(X)$ which is a contradiction to \cite[Lemma 1.7]{HK00}.
	
	The fact that we can cover $\Pi$ by the nef cones of only finitely many minimal models  follows from \cite[Theorem 2.6]{Kaw97} as $\Pi \subseteq \Mov(X) = \BigCone(X)$ in our setting. 
	\end{proof}
\section{Computing the numerical dimension}
In this section we adapt the estimates for the number of global sections for divisors close to the boundary of $\Mov(X)$  from \cite{L21} to our more general setting. 

\begin{lemma}[see {\cite[Lemma 5]{L21}}] \label{lem_bignef} Let $X$ be a Calabi-Yau threefold with Picard number 2. 
 There exist constants $C_{1,1}, C_{2,1} > 0$ such that for any big and nef divisor  Cartier divisor $D$ on $X$ we have
	$$ C_{1,1}D^3 < h^0(X,D) < C_{2,1}D^3.$$
\end{lemma}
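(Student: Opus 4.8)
The plan is to obtain an exact formula for $h^0(X,D)$ from Riemann--Roch and a vanishing theorem, and then to control the resulting lower--order term uniformly, using $\rho(X)=2$.

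On a smooth threefold, Hirzebruch--Riemann--Roch gives $\chi(X,D)=\tfrac16D^3-\tfrac14D^2\cdot K_X+\tfrac1{12}D\cdot(K_X^2+c_2(X))+\chi(\O_X)$. Since $X$ is Calabi--Yau we have $K_X=0$ and, using $h^1(\O_X)=0$ together with Serre duality, $\chi(\O_X)=1-0+0-1=0$, so $\chi(X,D)=\tfrac16D^3+\tfrac1{12}D\cdot c_2(X)$. If $D$ is moreover big and nef, then $D=K_X+D$ is big and nef, so Kawamata--Viehweg vanishing yields $H^i(X,D)=0$ for $i>0$ and hence
\[
h^0(X,D)=\tfrac16D^3+\tfrac1{12}D\cdot c_2(X).
\]
Since $X$ is a smooth minimal threefold, Miyaoka's inequality gives $D\cdot c_2(X)\ge0$ for every nef $D$; as $D$ is big, nef and Cartier we also have $D^3\in\mathbb{Z}_{>0}$, i.e.\ $D^3\ge1$. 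Together these give $h^0(X,D)\ge\tfrac16D^3>\tfrac17D^3$, which is the lower bound with $C_{1,1}=\tfrac17$.

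For the upper bound it suffices to find $C'>0$ with $D\cdot c_2(X)\le C'D^3$ for all big nef Cartier $D$, and then take $C_{2,1}=\tfrac16+\tfrac{C'}{12}$. Here I would use that $\rho(X)=2$: write $c_2(X)=\mu_1R_1+\mu_2R_2$ with $\mu_i\ge0$ in the two--dimensional Mori cone $\overline{NE}(X)$ (possible by Miyaoka), so it is enough to bound $D\cdot R_i$ by a multiple of $D^3$ for each of the two extremal classes $R_i$. Let $e,e'$ be the two extremal rays of $\Nef(X)$, labelled so that $e\cdot R_i=0<e'\cdot R_i$, and write $D=\alpha e+\beta e'$ with $\alpha,\beta\ge0$; then $D\cdot R_i=\beta\,(e'\cdot R_i)$, so the claim becomes $\beta\le C''D^3$. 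Expanding $D^3=a\alpha^3+b\alpha^2\beta+c\alpha\beta^2+d\beta^3$ with $a,b,c,d\ge0$, one has $D^3\ge(b\alpha^2+c\alpha\beta+d\beta^2)\,\beta$, so everything comes down to bounding the quadratic factor away from $0$ on big nef Cartier divisors. Away from the rays $e_j$ with $e_j^3=0$ this is a scaling/compactness argument on $\{N\in\Nef(X):N^3=1\}$ combined with $D^3\ge1$. Near a ray $e$ with $e^3=0$ one uses that $D$ is Cartier: if $e$ is rational, integrality of $D$ forces the coefficient of $e'$ (resp.\ of $e$) to be bounded below by a fixed positive rational, making the quadratic factor bounded below; if $e$ is irrational, then by Remark \ref{remarkOguiso} and the structure of $\cB(X)$ the ray $e$ coincides with a boundary ray of $\MovC(X)$ and is an eigenvector of $\sigma$, hence a real quadratic irrationality, which in particular is non-degenerate ($e^2\not\equiv0$), so $b>0$ and $D^3$ grows like a positive power of $\|D\|$ near $e$. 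In all cases one rules out the genuinely degenerate configuration $b=c=d=0$, i.e.\ $e\cdot D\cdot D'\equiv0$ for all $D'$, since it would make $e$ numerically proportional to $e'$, impossible for the projective variety $X$ with $\rho(X)=2$.

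The step I expect to be the real obstacle is precisely this boundedness claim near the boundary of $\Nef(X)$: showing that the cubic form $D\mapsto D^3$ does not collapse relative to the linear form $D\mapsto D\cdot c_2(X)$ as $D$ approaches an extremal ray. This is where the Calabi--Yau geometry genuinely enters --- semiampleness of big and nef divisors, the classification of the extremal contractions available on a Calabi--Yau threefold, and the arithmetic nature of the movable--cone boundary rays provided by the Cone conjecture --- as opposed to the purely formal Riemann--Roch input, which is routine.
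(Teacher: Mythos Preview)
Your argument coincides with the paper's through Riemann--Roch, Kawamata--Viehweg vanishing, Miyaoka's inequality $c_2(X)\cdot D\ge0$, and even the specific choice $C_{1,1}=\tfrac17$. For the upper bound the paper is much terser than you are: it simply notes that $D^3$ is a cubic and $c_2(X)\cdot D$ a non-negative linear form in the nef coordinates $(a_1,a_2)$, and asserts that $\tfrac{c_2(X)\cdot D}{12D^3}$ is therefore bounded above, with no case analysis. You are right that this is the step where something has to be checked.

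Your elaboration of the upper bound, however, contains a genuine gap. The appeal to Remark~\ref{remarkOguiso} is a misreading: that remark concerns the extremal rays $m_1,m_2$ of $\MovC(X)$, not those of $\Nef(X)$, so from irrationality of a nef boundary ray $e$ you cannot conclude that $e$ lies on $\partial\MovC(X)$ or that it is an eigenvector of $\sigma$. The subsequent inference ``$e$ is a real quadratic irrationality $\Rightarrow e^2\not\equiv 0$'' is likewise unjustified. In fact the irrational branch never arises in the paper's use of the lemma: in Lemma~\ref{lem_bignef_contraction} it is applied only to the finitely many minimal models $X_i$ of Corollary~\ref{coverNef}, whose pulled-back nef cones tile the rational polyhedral fundamental domain $\Pi$, so all the relevant nef boundaries are rational and your Cartier/integrality argument for that case already suffices. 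Drop the irrational-ray discussion and your proof matches the paper's, only with the upper bound spelled out more carefully.
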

\begin{proof}
	Let $D = a_1D_1+a_2D_2$ be a big and nef divisor, where the classes of $D_1$ and $D_2$ generate $\Nef(X)$. 
	Using the Hirzebruch–Riemann–Roch theorem and the Kawamata-Viehweg vanishing theorem we obtain
	$$ h^0(X,D) = \chi(X,D) = \frac{D^3}{6} + \frac{c_2(X)\cdot D}{12},$$
	and thus, as $D^3 \neq 0$,
	$$ \frac{h^0(X,D)}{D^3} =  \frac{1}{6} + \frac{c_2(X)\cdot D}{12D^3},$$
Now $D^3$ is a cubic polynomial in $a_1$ and  $a_2$ which takes only positive values. By results of Miyaoka  (\cite{miyaoka1987chern}, Theorem 6.6), $c_2(X)\cdot D$ is non-negative for any nef divisor $D$ and  a linear polynomial in $a_1$ and $a_2$. Thus, the right-hand-side is bounded from above by a positive constant $C_{2,1}$, whereas as a lower bound we may choose $C_{1,1} = \frac{1}{7}$.     
	\end{proof} 
	                     
Next we recall some notation from \cite{L21} which is used in the following. Let $X$ be as in Assumption \ref{mainassumption}. For a  divisor
$$D  = a_1R_1 + a_2R_2 \in \text{Mov}(X) (= \text{Big}(X)),$$
where $R_1\in m_1$ (resp. $R_2\in m_2$) is a fixed eigenvector to the eigenvalue $\lambda > 1$ (resp. $\lambda^{-1}$) of $\sigma$  we introduce new quadratic forms (in $a_1$ and $a_2$)
$$ L_1(D) = a_1a_2 \text{ and } L_2(D)= \frac{a_1}{a_2}.$$
With respect to the basis $R_1, R_2$, the matrix $\sigma \in \cB^{+}(X)$ is of the form 
$$
\begin{pmatrix}
\lambda & 0 \\ 0 & \lambda^{-1}
\end{pmatrix},    
$$
and we have
\begin{equation}\label{eq_invariantsigma}
L_1(\sigma D) = L_1(D) \text{ and }        L_2(\sigma D)   = \lambda^2L_2(D).
\end{equation}

\begin{remark} \label{L12coordinates}
	\begin{itemize}	\item[(a)] $L_1(-)$ is a quadratic form invariant under $\sigma$ and $L_2(-)$ describes the slope of rays. Therefore, a subcone of $\text{Big}(X)$ corresponds to a horizontal strip in the upper right corner of the new coordinate system given by $L_1(-)$ and $L_2(-)$. 
\item[(b)] For $D = a_1R_1 + a_2R_2$ big, we get $a_1 = L_1(D)^\frac{1}{2}L_2(D)^\frac{1}{2}$ and 
$a_2 =L_1(D)^\frac{1}{2}L_2(D)^{-\frac{1}{2}}$.
	\end{itemize}	
\end{remark}

\begin{lemma}\label{ineqL1} 
Let $X$ be a  smooth Calabi-Yau threefold of Picard number $2$ with infinite birational automorphism group such that $\cB^-(X)\neq \emptyset$.  
 Let $\Pi$ be the fundamental domain for the action of $\Bir (X)$ on $\MovC (X)$ as constructed in Proposition \ref{prop_NefInFundamental} and let $\tau_2\in \cB^-(X)$ be the birational automorphism fixing the boundary ray $z_2$ of $\Pi$. 
    Then 
	$$ \lambda^{-1} L_1(D) < L_1(\tau_2D )<\lambda L_1(D).$$
	for all divisors $D\in \Pi$ in the fundamental domain. 
\end{lemma}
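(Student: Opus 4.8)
The plan is to exploit the fact that $\tau_2$ is an involution (in $\cB(X)$) fixing the ray $z_2$, and that it relates to $\sigma$ via $\sigma = \tau_2\tau_1$. First I would write $\tau_2$ as a matrix with respect to the eigenbasis $R_1, R_2$ of $\sigma$. Since $\det\tau_2 = -1$ and $\tau_2^2 = \id$ (at the level of the action on $N^1(X)$), the matrix of $\tau_2$ in \emph{any} basis has trace $0$; combined with $\sigma = \tau_2\tau_1$ and $\tau_1 = \tau_2\sigma$, conjugating $\sigma$ by $\tau_2$ must send $\lambda \mapsto \lambda^{-1}$, i.e. $\tau_2$ swaps the two eigenrays $m_1$ and $m_2$ of $\sigma$. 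Hence with respect to $R_1, R_2$ the matrix of $\tau_2$ has the anti-diagonal form
$$
\tau_2 = \begin{pmatrix} 0 & b \\ b^{-1} & 0 \end{pmatrix}
$$
for some $b > 0$ (the sign is positive because $\tau_2$ preserves the cone $\MovC(X)$, hence the positive quadrant spanned by $R_1, R_2$).

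Next I would compute the effect on the coordinates. For $D = a_1 R_1 + a_2 R_2$ one gets $\tau_2 D = (b\,a_2) R_1 + (b^{-1} a_1) R_2$, so
$$
L_1(\tau_2 D) = (b\,a_2)(b^{-1}a_1) = a_1 a_2 = L_1(D), \qquad L_2(\tau_2 D) = \frac{b\,a_2}{b^{-1}a_1} = b^2 L_2(D)^{-1}.
$$
So in fact $L_1$ is \emph{exactly} invariant under $\tau_2$, which is stronger than the claimed two-sided inequality. Wait — the subtlety is that $\tau_2$ is only a birational automorphism, not a morphism, so the strict transform $\tau_2 D$ of $D$ need not be literally the linear-algebra image of the class; rather, the pullback $\tau_2^* D$ as a class on $N^1(X)$ \emph{is} the linear image (birational maps between minimal CY threefolds are isomorphisms in codimension $1$, so they act linearly on $N^1$ and preserve $\MovC$). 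Thus the class of $\tau_2 D$ in $N^1(X)$ equals the matrix image above, and the equality $L_1(\tau_2 D) = L_1(D)$ holds on the nose. The stated inequality $\lambda^{-1}L_1(D) < L_1(\tau_2 D) < \lambda L_1(D)$ is then immediate since $\lambda > 1$.

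The only genuine point to nail down is that $\tau_2$ swaps $m_1$ and $m_2$ (equivalently, that its matrix in the $\sigma$-eigenbasis is strictly anti-diagonal, with no diagonal part). I would argue: $\tau_2\sigma\tau_2^{-1} = \tau_2\sigma\tau_2$; using $\sigma = \tau_2\tau_1$ this is $\tau_2(\tau_2\tau_1)\tau_2 = \tau_1\tau_2 = (\tau_2\sigma)^{-1}\cdot$(something) — more cleanly, $\tau_1\tau_2 = (\tau_2\tau_1)^{-1} = \sigma^{-1}$, so $\tau_2\sigma\tau_2^{-1} = \sigma^{-1}$. Therefore conjugation by $\tau_2$ carries the $\lambda$-eigenspace of $\sigma$ to the $\lambda^{-1}$-eigenspace and vice versa, i.e. $\tau_2(m_1) = m_2$ and $\tau_2(m_2) = m_1$. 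This forces the anti-diagonal form, and the rest is the one-line computation above. The main (mild) obstacle is bookkeeping the distinction between the birational automorphism and its induced linear action, and checking the positivity of the off-diagonal entry from cone-preservation; everything else is elementary linear algebra.
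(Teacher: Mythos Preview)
Your argument is correct and in fact proves the stronger statement $L_1(\tau_2 D)=L_1(D)$. The key identity $\tau_2\sigma\tau_2^{-1}=\sigma^{-1}$ (which you derive from $\sigma=\tau_2\tau_1$ and $\tau_1^2=\tau_2^2=\id$ in $\cB(X)$) forces $\tau_2$ to swap the eigenrays $m_1,m_2$ of $\sigma$; together with $\det\tau_2=-1$ and cone preservation this gives the anti-diagonal form $\begin{pmatrix}0&b\\ b^{-1}&0\end{pmatrix}$ in the basis $R_1,R_2$, from which $L_1(\tau_2 D)=L_1(D)$ is immediate. The strict inequality in the lemma then follows since $\lambda>1$ and $L_1(D)>0$ for $D\in\Pi\setminus\{0\}$ (both $z_1$ and $z_2$ lie in the interior of $\MovC(X)$, so $D=a_1R_1+a_2R_2$ with $a_1,a_2>0$).

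The paper takes a different, purely computational route: it writes $D=d_1z_1+d_2z_2$, uses $\tau_2 z_1=\sigma z_1$ and $\tau_2 z_2=z_2$ to compute $\tau_2 D$ explicitly, and then compares the two quadratic polynomials $L_1(D)$ and $L_1(\tau_2 D)$ coefficient by coefficient, bounding the middle coefficient by the crude estimate $\lambda^{-1}(a_1b_2+a_2b_1)<\lambda a_1b_2+\lambda^{-1}a_2b_1<\lambda(a_1b_2+a_2b_1)$. This yields only the two-sided inequality; the paper does not observe the relation $a_2b_1=\lambda a_1b_2$ (coming from $\tau_2 z_2=z_2$) that would collapse its two expressions to an equality. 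Your approach is more conceptual and explains \emph{why} the bound holds: $L_1$ is not just $\sigma$-invariant but $\cB(X)$-invariant. The paper's approach has the minor advantage of being entirely self-contained in the $(z_1,z_2)$ basis without invoking the conjugation identity, but it obscures the underlying symmetry.
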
	
\begin{proof}
 Using the notation of Proposition \ref{prop_NefInFundamental}, the fundamental domain is generated by the two rays $z_1 = a_1R_1 + a_2R_2$ and $z_2 = b_1R_1 + b_2R_2$. Let $D=d_1z_1+d_2z_2$ be a divisor in the fundamental domain with positive coefficients $d_1,d_2$. Since $\tau_2 z_2 = z_2$ and $\tau_2 z_1 = \sigma z_1$, we have 
 $$
 \tau_2 D = d_1 \sigma z_1 + d_2 z_2 = 
 (\lambda d_1a_1 + d_2b_1)R_1 + (\lambda^{-1}d_1a_2 + d_2b_2)R_2.
 $$
 We compute 
 $$
 L_1(D) = a_1a_2 d_1^2 + (a_1b_2 + a_2b_1)d_1d_2 + b_1b_2d_2^2
 $$
 and 
 $$
 L_1(\tau_2D) = a_1a_2 d_1^2 + (\lambda a_1b_2 + \lambda^{-1}a_2b_1)d_1d_2 + b_1b_2d_2^2.
 $$
 Note that $L_1(D)$ and $L_1(\tau_2D)$ are homogeneous polynomials in $d_1$ and $d_2$ of degree $2$. Since $\lambda>1$ and all coefficients of the polynomials $L_1(D)$ and $L_1(\tau_2D)$ are positive, we get 
 $$
 \lambda^{-1} L_1(D)< L_1(\tau_2D) < \lambda L_1(D)
 $$
 for all positive integer pairs $(d_1,d_2)$. 
\end{proof}
\begin{lemma}\label{lem_bignef_contraction}
Let $X$ be a  smooth Calabi-Yau threefold of Picard number $2$ with infinite birational automorphism group.  Let $\Pi$ be the fundamental domain for the action of $\Bir (X)$ on $\MovC (X)$ as constructed in Proposition \ref{prop_NefInFundamental} in the case $\cA(X) = \id$ or the cone $\Pi$ from Proposition \ref{prop_NefInFundamentalAut} otherwise.
There exist constants $C_{1,2},C_{2,2} >0$ such that for any big Cartier divisor  $D \in \Pi$
$$ C_{1,2}L_1(D)^{\frac{3}{2}} < h^0(X,D) < C_{2,2}L_1(D)^{\frac{3}{2}}.$$
\end{lemma}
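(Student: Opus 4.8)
The plan is to reduce the estimate for a big Cartier divisor $D\in\Pi$ to the already-proven big-and-nef estimate of Lemma~\ref{lem_bignef} by passing to a minimal model where the strict transform of $D$ is nef. Concretely, by Corollary~\ref{coverNef} there are finitely many minimal models $X_1,\dots,X_r$ and birational maps $\phi_i\colon X\dashrightarrow X_i$, isomorphisms in codimension $1$ and compositions of $D$-flops, with $\Pi=\bigcup_i\phi_i^*(\Nef(X_i))$. Given $D\in\Pi$, pick $i$ with $D\in\phi_i^*(\Nef(X_i))$ and set $D':=(\phi_i)_*D$, which is big and nef on $X_i$. Since $\phi_i$ is an isomorphism in codimension $1$, $H^0(X,D)\cong H^0(X_i,D')$, so $h^0(X,D)=h^0(X_i,D')$. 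Applying Lemma~\ref{lem_bignef} on $X_i$ (which has Picard number $2$ as well, being isomorphic to $X$ in codimension $1$) gives $C_{1,1}(D')^3<h^0(X,D)<C_{2,1}(D')^3$, with constants that may a priori depend on $i$; but there are only finitely many $X_i$, so take the min and max over $i$ to get uniform constants.

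The remaining task is to compare the intersection number $(D')^3$ on $X_i$ with the quantity $L_1(D)^{3/2}$ computed on $X$. The first point is that the triple self-intersection is a birational invariant here: flops preserve the cubic form on $N^1$ (or, more simply, for an isomorphism in codimension $1$ the pushforward $D'=(\phi_i)_*D$ satisfies $(D')^3=D^3$, since both equal the corresponding number computed on a common resolution / because the cubic intersection form is defined via the isomorphism $N^1(X)\cong N^1(X_i)$). So it suffices to bound $D^3$ above and below by constant multiples of $L_1(D)^{3/2}$ for $D\in\Pi$. Writing $D=a_1R_1+a_2R_2$ with $a_1,a_2>0$, the cubic form $D^3$ is a homogeneous cubic polynomial in $a_1,a_2$ taking only positive values on the interior of $\MovC(X)=\BigCone(X)$; by Remark~\ref{L12coordinates}(b) we have $a_1=L_1(D)^{1/2}L_2(D)^{1/2}$ and $a_2=L_1(D)^{1/2}L_2(D)^{-1/2}$, so $D^3=L_1(D)^{3/2}\,Q(L_2(D))$ where $Q$ is a fixed Laurent polynomial in $L_2(D)^{1/2}$ that is strictly positive on the range of slopes occurring in $\Pi$. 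Since $\Pi$ is a rational polyhedral (in particular closed, and in our setting $L_2$ ranges over a compact interval bounded away from $0$ and $\infty$), $Q(L_2(D))$ is bounded above and below by positive constants on $\Pi$. Combining, $D^3$ is squeezed between constant multiples of $L_1(D)^{3/2}$.

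Chaining the two comparisons yields constants $C_{1,2},C_{2,2}>0$, depending only on $X$ and the choice of $\Pi$, with $C_{1,2}L_1(D)^{3/2}<h^0(X,D)<C_{2,2}L_1(D)^{3/2}$ for every big Cartier divisor $D\in\Pi$, as claimed.

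\textbf{Main obstacle.} The subtle point is ensuring the constants are uniform over all of $\Pi$: one needs that $L_2$ stays in a compact subinterval of $(0,\infty)$ on $\Pi$ (so that the positive cubic $D^3$ does not degenerate relative to $L_1(D)^{3/2}$ near the boundary rays), and that only finitely many minimal models $X_i$ are needed so that the Riemann–Roch constants $C_{1,1}, C_{2,1}$ can be chosen independently of $i$ — both of which are exactly what Corollary~\ref{coverNef} (via \cite[Theorem 2.6]{Kaw97}) and the rational polyhedrality of $\Pi$ provide. The bookkeeping that $h^0$ and $D^3$ are both unchanged under isomorphisms in codimension $1$ is routine but must be stated carefully, since $\phi_i$ is only birational, not a morphism.
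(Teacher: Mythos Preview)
Your overall architecture matches the paper's, but there is a genuine error in the step where you assert $(D')^3 = D^3$. Isomorphisms in codimension $1$ do \emph{not} preserve the cubic intersection form on $N^1$: for a flop along a curve $C$ the self-intersection changes (for a simple Atiyah flop one has a formula of the shape $(D')^3 = D^3 - (D\cdot C)^3$). Your common-resolution justification fails because on a resolution $W$ with morphisms $p\colon W\to X$ and $q\colon W\to X_i$ the pullbacks $p^*D$ and $q^*D'$ differ by exceptional divisors, so $(p^*D)^3 = D^3$ and $(q^*D')^3 = (D')^3$ need not coincide. The isomorphism $N^1(X)\cong N^1(X_i)$ is only an isomorphism of lattices, not of cubic forms.

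This error cascades into the next step: once you compute $D^3$ on $X$ rather than $(D')^3$ on $X_i$, the claim that the cubic form ``tak[es] only positive values on the interior of $\MovC(X)$'' is unjustified. For big but non-nef $D$ on $X$ the number $D^3$ can be negative; what is positive is the volume, and the volume equals $(D')^3$ on the model where $D'$ is nef, not $D^3$ on $X$. Hence your Laurent polynomial $Q(L_2(D))$ may vanish or change sign on $\Pi$, and the compactness argument collapses. The paper avoids this by expanding $(D')^3$ on $X'$ in terms of the intersection numbers $R_1'^3,\,R_1'^2R_2',\,R_1'R_2'^2,\,R_2'^3$ of the strict transforms $R_j' = (\phi_i)_*R_j$, noting these numbers depend only on $\phi_i$, and using that the bracketed $L_2$-factor is positive precisely because $D'$ is nef on $X'$. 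Uniformity then comes from the finiteness of the $\phi_i$, exactly as you say. Your argument is easily repaired by making this substitution.
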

\begin{proof}
Let $D\in \Pi$ be a big Cartier divisor. 
By Corollary \ref{coverNef} there exists a minimal model $X'$ of $X$ and an isomorphism in codimension one $\phi\colon X \dashrightarrow X'$ such that $\phi_\ast D = D'$ is nef.  Note that $X'$ is a smooth Calabi-Yau variety satisfying the properties of Assumption \ref{mainassumption}. Now, as $h^0(X,D) = h^0(X',D')$, Lemma \ref{lem_bignef} implies
	$$ C_{1,1}D'^3 < h^0(X,D) < C_{2,1}D'^3.$$
Hence, it remains to bound $D'^3$ from above and below with respect to $L_1(D)$. Recall that $\phi$ induces an isomorphism $ \MovC(X) \cong \MovC(X')$, hence, the strict transforms  $R_1' = \phi_\ast R_1$ and $R_2' = \phi_\ast R_2$ generate $\MovC(X')$.  Consequently, for the divisor 
 $D = a_1R_1+a_2R_2 \in \textrm{Big} (X) = \Mov(X)$  with positive coefficients $a_1,a_2$ 
 we obtain 
 $$ D' = a_1R_1'+a_2R_2'$$
and 
using the coordinates $L_1(-)$ and $L_2(-)$ also on $\MovC(X') \cong \MovC(X)$, we get 
$$ L_1(D') = L_1(D) \text{ and }  L_2(D') = L_2(D).$$ 
Now, as $D'^3$ is positive for all $D' \in \Nef(X')$ as well as $L_1(D')$ and $L_2(D')$ by definition, we get
\begin{align*}
0 < D'^3  = &\  a_1^3 R_1'^3 + a_1^2a_2R_1'^2R_2' + a_1a_2^2R_1'R_2'^2 + a_2^3 R_2'^3 \\
   = &\ L_1(D')^\frac{3}{2}L_2(D')^\frac{3}{2} R_1'^3 +
    L_1(D')^\frac{3}{2}L_2(D')^\frac{1}{2} R_1'^2R_2'  \\
   & + L_1(D')^\frac{3}{2}L_2(D')^{-\frac{1}{2}} R_1'R_2'^2 + 
   L_1(D')^\frac{3}{2}L_2(D')^{-\frac{3}{2}} R_2'^3 \\
   = & \ L_1(D')^\frac{3}{2}\underbrace{\left(L_2(D')^\frac{3}{2} R_1'^3 +
    L_2(D')^\frac{1}{2} R_1'^2R_2'  
    + L_2(D')^{-\frac{1}{2}} R_1'R_2'^2 + 
   L_2(D')^{-\frac{3}{2}} R_2'^3\right)}_{>0, \textrm{ for all }D'\in \Nef (X')}.
\end{align*}  
But $L_2(-)$ is bounded from below and above for any divisor in $\phi^* \Nef(X')$ by Remark \ref{L12coordinates} and the intersection products of $R_1'$ and $R_2'$ only depend on $\phi$. Therefore, we can bound the factor 
$$L_2(D')^\frac{3}{2} R_1'^3 +
    L_2(D')^\frac{1}{2} R_1'^2R_2'  
    + L_2(D')^{-\frac{1}{2}} R_1'R_2'^2 + 
   L_2(D')^{-\frac{3}{2}} R_2'^3$$
from below and above for any divisor in $D'\in \Nef (X')$ only depending on $\phi$.
Hence, we obtain positive constants $C_{1,\phi}$ and $C_{2,\phi}$ for all $D\in \phi^* \Nef(X')$ such that 
$$ C_{1,\phi}L_1(D)^{\frac{3}{2}} < h^0(X,D) < C_{2,\phi}L_1(D)^{\frac{3}{2}}.$$ 
The result follows now from the fact that we can cover $\Pi$ by only finitely many $\phi_i^*\Nef(X_i)$ as shown in Corollary \ref{coverNef}. 
\end{proof}

\begin{corollary}\label{ineqL1h0} 
Let $X$ be a  smooth Calabi-Yau threefold of Picard number $2$ with infinite birational automorphism group. 
 There exist constants $C_{1,3},C_{2,3} >0$ such that for any Cartier divisor $D \in \Mov(X)$
	$$ C_{1,3}L_1(D)^{\frac{3}{2}} < h^0(X,D) < C_{2,3}L_1(D)^{\frac{3}{2}}.$$
\end{corollary}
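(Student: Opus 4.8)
The plan is to bootstrap from Lemma~\ref{lem_bignef_contraction}, which already gives the desired two-sided bound for big Cartier divisors lying in the fundamental domain $\Pi$, and to propagate it to all of $\Mov(X)$ using the $\sigma$-action together with the covering $\Mov(X) = \bigcup_{g\in\cB(X)} g\Pi$ from \eqref{eq_fundamentalcone}. First I would observe that for any Cartier divisor $D\in\Mov(X)$ there is $g\in\cB(X)$ with $g^{-1}D\in\Pi$ (or, if one prefers to stay with integral classes, at least $D$ lies in one of the cones $\sigma^n\Pi$ or $\sigma^n\tau_2\Pi$). Since $g^*$ comes from a birational automorphism which is an isomorphism in codimension~$1$, we have $h^0(X,D)=h^0(X,g^{-1}D)$, so it suffices to compare $L_1(D)$ with $L_1(g^{-1}D)$ up to a multiplicative constant that is uniform over $g\in\cB(X)$.

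The key step is therefore the estimate on how $L_1$ transforms under $\cB(X)$. Writing $g = \sigma^n$ or $g=\sigma^n\tau_2$ (which exhausts $\cB(X)$ by the structure theorem, since $\cB(X)=\langle\cA(X),\sigma,\tau\rangle$ and $\cA(X)$ fixes the nef cone hence acts with bounded distortion on $L_1$), I would use \eqref{eq_invariantsigma}, namely $L_1(\sigma D)=L_1(D)$, to handle the $\sigma^n$ part \emph{for free}: conjugating $\Pi$ by powers of $\sigma$ does not change $L_1$ at all. The only genuine distortion comes from $\tau_2$, and that is exactly the content of Lemma~\ref{ineqL1}: for $D\in\Pi$ one has $\lambda^{-1}L_1(D) < L_1(\tau_2 D) < \lambda L_1(D)$. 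Combining, for any $D\in\Mov(X)$ Cartier there is $g\in\cB(X)$ with $g^{-1}D\in\Pi$ and
$$
\lambda^{-1} L_1(g^{-1}D) < L_1(D) < \lambda\, L_1(g^{-1}D),
$$
and then Lemma~\ref{lem_bignef_contraction} applied to $g^{-1}D\in\Pi$ gives
$$
C_{1,2}\lambda^{-3/2} L_1(D)^{3/2} < h^0(X,g^{-1}D) = h^0(X,D) < C_{2,2}\lambda^{3/2} L_1(D)^{3/2},
$$
so we may take $C_{1,3} = C_{1,2}\lambda^{-3/2}$ and $C_{2,3} = C_{2,2}\lambda^{3/2}$. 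The case $\cB^-(X)=\emptyset$ is even simpler: then $\cB(X)$ is generated by $\sigma$ (up to $\cA(X)$), $L_1$ is literally $\sigma$-invariant, and no $\lambda$-factor is lost at all.

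The main obstacle I expect is the bookkeeping around $\cA(X)$ and around the fact that translating an integral class by $g^{-1}$ may land it on the boundary of $\Pi$ rather than in its interior, and that Lemma~\ref{lem_bignef_contraction} was stated for \emph{big} Cartier divisors in $\Pi$. One has to check that $\Mov(X)=\BigCone(X)$ (true here by the structure theorem) so that every nonzero Cartier class in $\Mov(X)$ is automatically big, and that the boundary rays $m_1,m_2$ contain no integral classes (Remark~\ref{remarkOguiso}), so $D\in\Mov(X)$ Cartier is never on $\partial\Mov(X)$; a divisor on a boundary ray \emph{of $\Pi$} is fine since it is interior to $\Mov(X)$ and Lemma~\ref{lem_bignef_contraction}'s proof only used the covering by finitely many $\phi_i^*\Nef(X_i)$, which is a closed cover of the closed cone $\Pi$. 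Finally, if $\cA(X)$ is nontrivial one notes it acts on $N^1(X)$ preserving $\Nef(X)\subseteq\Pi$ by finitely many linear maps, hence distorts $L_1$ on $\Pi$ by a bounded factor, which can be absorbed into $C_{1,3},C_{2,3}$; in fact since $\cA(X)$ is finite (it acts faithfully on $N^1(X)$ with a fixed nef class, so it is finite) this is a finite set of maps and the bound is immediate. This completes the argument.
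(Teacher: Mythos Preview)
Your proof is correct and follows essentially the same route as the paper: reduce to $\Pi\cup\tau_2\Pi$ using the $\sigma$-invariance of both $L_1$ and $h^0$, then invoke Lemma~\ref{lem_bignef_contraction} on $\Pi$ and Lemma~\ref{ineqL1} to absorb the single $\tau_2$-distortion. Your extra bookkeeping about $\cA(X)$, bigness, and boundary rays is harmless but unnecessary---the covering $\Mov(X)=\bigcup_n\sigma^n(\Pi\cup\tau_2\Pi)$ already avoids $\cA(X)$, and $\Mov(X)=\BigCone(X)$ together with the irrationality of $m_1,m_2$ disposes of the rest.
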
	
\begin{proof}
   Since  the quadratic form $L_1(-)$ and $h^0(X,-)$ are invariant under the birational automorphism $\sigma$ which is an isomorphism in codimension $1$,  we may assume from the proof of Proposition \ref{prop_NefInFundamental}
    that $D\in \Pi\cup \tau_2\Pi$ in the case $\cA(X) = \id$ or $D \in \Pi$ for the cone $\Pi$ from Equation \eqref{eq_coverMovAut} in the case  $\cA(X) \neq \id$. If $D \in \Pi$ the result follows directly from the previous lemma. 	For $D \in \tau_2\Pi$, we can apply Lemma \ref{ineqL1} and proceed as before. 
	\end{proof}
	
\begin{remark}
	Note that in the proof of Corollary \ref{ineqL1h0} we work with the fundamental domain $\Pi$ which is the finite union of pullbacks of nef cones as shown in Corollary  \ref{coverNef}. But the computation of the numerical dimension of a boundary divisor of $\MovC(X)$ also works for any closed cone spanned by a divisor $R$ and $\sigma R$ where $\sigma$ generates $\cB^+(X)$.
\end{remark}

  \begin{lemma}[see {\cite[Lemma 7]{L21}}] \label{rounddown}
  Let $X$ be a  smooth Calabi-Yau threefold of Picard number $2$ with infinite birational automorphism group. 
  	There exist constants $C_{1,4},C_{2,4}$ and $C_1 > 1$ such that for any  $D = a_1R_1 + a_2R_2 \in \MovC(X)$ and ample divisor $A =  b_1R_1 + b_2R_2$ with $b_i > C_1$ we have
  	$$ C_{1,4}L_1(D+A) < L_1(\lfloor D\rfloor +A) < C_{2,4}L_1(D+A) .$$
  \end{lemma}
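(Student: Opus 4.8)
The plan is to reduce the assertion to an elementary estimate on a product of two positive linear forms, after controlling the difference $D-\lfloor D\rfloor$ in the coordinates given by $R_1,R_2$. Fix a prime decomposition $D=\sum_i c_i\Delta_i$ of the $\mathbb{R}$-divisor $D$. By definition of the round-down, $E:=D-\lfloor D\rfloor=\sum_i\{c_i\}\Delta_i$ with $\{c_i\}=c_i-\lfloor c_i\rfloor\in[0,1)$, so $E$ is an effective divisor all of whose coefficients lie in $[0,1)$. Writing $[\Delta_i]=\alpha_iR_1+\beta_iR_2$ and $E=e_1R_1+e_2R_2$, this yields $|e_1|\le\sum_i|\alpha_i|$ and $|e_2|\le\sum_i|\beta_i|$, and moreover $e_1,e_2\ge 0$ since $E$ is effective and $\EffC(X)=\MovC(X)=\mathbb{R}_{\ge 0}R_1+\mathbb{R}_{\ge 0}R_2$. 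Putting $M:=\max\bigl(\sum_i|\alpha_i|,\sum_i|\beta_i|\bigr)$ we obtain $0\le e_1,e_2\le M$, where $M$ depends only on the finite prime support of $D$ and not on the coefficients $c_i$; in particular $M$ is unchanged when $D$ is replaced by a positive integer multiple — the setting in which the lemma will be applied, namely to the divisors $mD_0$ for a fixed boundary divisor $D_0$.

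Next I would take $C_1:=2M+2$, so that $C_1>1$. Given $D=a_1R_1+a_2R_2\in\MovC(X)$ (so $a_1,a_2\ge 0$) and an ample $A=b_1R_1+b_2R_2$ with $b_1,b_2>C_1$, the $R_1,R_2$-coordinates of $D+A$ and of $\lfloor D\rfloor+A=D+A-E$ are all strictly positive, because $a_i+b_i\ge b_i>C_1>0$ and $a_i+b_i-e_i\ge b_i-M>C_1-M=M+2>0$. Hence $L_1(D+A)=(a_1+b_1)(a_2+b_2)>0$ and $L_1(\lfloor D\rfloor+A)=(a_1+b_1-e_1)(a_2+b_2-e_2)>0$, and the lemma reduces to bounding the quotient of these two positive numbers from below and above by positive constants.

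That last step is a short computation: for $i=1,2$,
\[
0\le\frac{e_i}{a_i+b_i}\le\frac{M}{C_1}<\frac12,
\]
where the middle inequality uses $0\le e_i\le M$ and $a_i+b_i>C_1$, and the last uses $C_1=2M+2>2M$. Consequently $\tfrac12<\frac{a_i+b_i-e_i}{a_i+b_i}\le 1$ for $i=1,2$, and multiplying these two inequalities gives
\[
\tfrac14\,L_1(D+A)<L_1(\lfloor D\rfloor+A)\le L_1(D+A)<2\,L_1(D+A).
\]
Thus the lemma holds with $C_{1,4}=\tfrac14$, $C_{2,4}=2$ and $C_1=2M+2$.

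I do not anticipate any real difficulty here. The one point that needs care — and the reason the constants are tied to $D$ — is the boundedness of $E=D-\lfloor D\rfloor$ in the $R_1,R_2$-coordinates, which cannot be made uniform over all of $\MovC(X)$: applying $\sigma$ repeatedly to any prime divisor produces prime divisors of arbitrarily large class (the boundary rays $m_1,m_2$ being irrational, a prime divisor has class $\alpha R_1+\beta R_2$ with $\alpha,\beta>0$, and $\sigma^n$ sends it to $\lambda^n\alpha R_1+\lambda^{-n}\beta R_2$), so that half of such a prime divisor is effective with all coefficients $<1$ but of unbounded class. Hence $M$ genuinely depends on the prime support of $D$. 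This is harmless for the application, where $D$ runs through the multiples $mD_0$ of a fixed $\mathbb{R}$-divisor $D_0$, all of which have the same prime support, so that $M$ — and with it $C_1$, $C_{1,4}$, $C_{2,4}$ — can be chosen independently of $m$.
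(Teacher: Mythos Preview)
Your elementary estimate is correct and is exactly the argument of \cite[Lemma~7]{L21}, which the paper merely cites without reproducing.

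The one point to amend is your claimed dependence of the constants on $D$. You read ``$D=a_1R_1+a_2R_2\in\MovC(X)$'' as allowing $D$ to be \emph{any} $\mathbb{R}$-divisor with that numerical class, and then rightly observe that the prime support of such a $D$ can be arbitrarily large (your example $\tfrac12\,\sigma^n\Delta$ is apt). But the intended reading --- and the only one under which $\lfloor D\rfloor$ is even determined by $(a_1,a_2)$ --- is that $R_1,R_2$ have been fixed once and for all as specific $\mathbb{R}$-divisors (say as explicit irrational combinations of two integral generators $H_1,H_2$ of $\Pic(X)$), and $D$ is \emph{literally} the $\mathbb{R}$-divisor $a_1R_1+a_2R_2$. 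Then every such $D$ has prime support contained in the fixed finite set $\mathrm{Supp}(R_1)\cup\mathrm{Supp}(R_2)$, so your bound $M$ depends only on the chosen $R_1,R_2$, and $C_1=2M+2$, $C_{1,4}=\tfrac14$, $C_{2,4}=2$ are genuinely uniform in $(a_1,a_2)$, as the lemma asserts. Your own computation already establishes this; only the concluding caveat needs to be replaced by this observation.
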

\begin{proof}
	The proof is identical to \cite[Lemma 7]{L21}.
	\end{proof}

\begin{theorem}[$\Rightarrow$ Main Theorem]
Let $X$ be a  smooth Calabi-Yau threefold of Picard number $2$ with infinite birational automorphism group. 
 Suppose that $A = b_1 R_1  + b_2R_2$ is an ample Cartier divisor with $b_1,b_2 \geq C_1$. There exist constants $C_{1,5}$ and $C_{2,5}$ such that for all sufficiently large $m$
 $$
 C_{1,5}m^{\frac{3}{2}} < h^0 (X, \lfloor mR_1\rfloor + A) < C_{2,5} m^{\frac{3}{2}}.
 $$
\end{theorem}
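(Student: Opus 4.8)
Here is how I would prove the statement, building on the lemmas already established.

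The plan is to chain three ingredients: Lemma~\ref{rounddown}, an elementary computation of the form $L_1$, and Corollary~\ref{ineqL1h0}. Since $X$ is smooth, $\lfloor mR_1\rfloor + A$ is a Cartier divisor for every $m$, so the cited estimates will apply as soon as we know this class lies in $\Mov(X)$. First I would record, using $R_1 = 1\cdot R_1 + 0\cdot R_2$ and the definition $L_1(a_1R_1+a_2R_2)=a_1a_2$, the identity
$$ L_1(mR_1 + A) = L_1\bigl((m+b_1)R_1 + b_2R_2\bigr) = (m+b_1)b_2 .$$
Hence $L_1(mR_1+A)$ is linear in $m$, and there are positive constants $c_1,c_2$ depending only on $b_1,b_2$ with $c_1 m \le L_1(mR_1+A) \le c_2 m$ for all $m \ge b_1$ (for instance $c_1 = b_2$, $c_2 = 2b_2$).

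Next, since $mR_1$ lies on the boundary ray $m_1\subset\MovC(X)$ (it has coordinates $a_1=m\ge 0$, $a_2=0$) and $b_1,b_2\ge C_1$, Lemma~\ref{rounddown} applies with $D=mR_1$ and yields
$$ C_{1,4}\, L_1(mR_1 + A) < L_1\bigl(\lfloor mR_1\rfloor + A\bigr) < C_{2,4}\, L_1(mR_1 + A).$$
Combining this with the previous paragraph gives, for all sufficiently large $m$,
$$ C_{1,4}c_1\, m \;<\; L_1\bigl(\lfloor mR_1\rfloor + A\bigr) \;<\; C_{2,4}c_2\, m .$$

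Then I would check that $\lfloor mR_1\rfloor + A \in \Mov(X) = \BigCone(X)$ for large $m$, so that Corollary~\ref{ineqL1h0} applies. Because $\MovC(X) = \mathbb{R}_{\ge 0}R_1 + \mathbb{R}_{\ge 0}R_2$, a class $a_1R_1+a_2R_2$ lies in $\Mov(X)$ exactly when $a_1,a_2>0$. The $R_1$-coordinate of $\lfloor mR_1\rfloor + A$ equals $m+b_1$ up to the bounded error introduced by rounding, hence is positive for $m$ large; and the displayed bound above shows $L_1\bigl(\lfloor mR_1\rfloor + A\bigr)=a_1a_2>0$, so $a_1>0$ forces $a_2>0$. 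Thus $\lfloor mR_1\rfloor + A\in\Mov(X)$, and Corollary~\ref{ineqL1h0} gives
$$ C_{1,3}\, L_1\bigl(\lfloor mR_1\rfloor + A\bigr)^{\frac{3}{2}} \;<\; h^0\bigl(X,\lfloor mR_1\rfloor + A\bigr) \;<\; C_{2,3}\, L_1\bigl(\lfloor mR_1\rfloor + A\bigr)^{\frac{3}{2}} .$$
Substituting $C_{1,4}c_1\, m < L_1\bigl(\lfloor mR_1\rfloor + A\bigr) < C_{2,4}c_2\, m$ and setting $C_{1,5} = C_{1,3}(C_{1,4}c_1)^{3/2}$, $C_{2,5} = C_{2,3}(C_{2,4}c_2)^{3/2}$ finishes the proof.

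The one genuinely delicate point is controlling the effect of the round-down $\lfloor\,\cdot\,\rfloor$: one must know the rounding error neither pushes the class out of $\Mov(X)$ nor disturbs the leading-order behaviour of $L_1$, and this is precisely the content of Lemma~\ref{rounddown} (available because $b_1,b_2\ge C_1$). With that in hand the remaining steps are bookkeeping. Finally, since $R_1$ spans the boundary ray $m_1$ of $\MovC(X)$, the estimate just proved gives $\kappa_\sigma(X,R_1)=\frac{3}{2}$, and the $\sigma$-invariance of $L_1$ and of $h^0(X,-)$ used throughout propagates this to every $\mathbb{R}$-divisor on $\partial\MovC(X)$, yielding the Main Theorem.
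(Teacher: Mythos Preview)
Your proof is correct and follows essentially the same route as the paper: apply Lemma~\ref{rounddown} to pass from $L_1(mR_1+A)$ to $L_1(\lfloor mR_1\rfloor+A)$, compute $L_1(mR_1+A)=(m+b_1)b_2$, and then invoke Corollary~\ref{ineqL1h0}. You are in fact slightly more careful than the paper in explicitly verifying that $\lfloor mR_1\rfloor + A$ is Cartier and lies in $\Mov(X)$ before applying Corollary~\ref{ineqL1h0}; one small quibble is that in your final sentence the passage from $R_1$ to the other boundary ray $m_2$ is not by $\sigma$-invariance (since $R_1,R_2$ are distinct eigenlines) but simply by rerunning the identical argument with $R_2$ in place of $R_1$.
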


\begin{proof}
 By Lemma \ref{rounddown}, we find constants $C_{1,4},C_{2,4}$ and $C_1 > 1$ such that 
 $$ C_{1,4}L_1(mR_1 +A) < L_1( \lfloor mR_1\rfloor + A) < C_{2,4}L_1(mR_1 +A)$$ 
 for the given ample divisor $A$. We can compute 
 $$
 L_1(mR_1 +A) = L_1((m+b_1)R_1 + b_2R_2) = (m+b_1)b_2.
 $$
 Hence, combining this with Lemma \ref{ineqL1h0} for $D = \lfloor mR_1 \rfloor +A$, we get 
 $$
 C_{1,3}(C_{1,4}(m+b_1)b_2)^{\frac{3}{2}} < h^0 (X, \lfloor mR_1\rfloor + A) < C_{2,3}(C_{2,4}(m+b_1)b_2)^{\frac{3}{2}}
 $$
 and for sufficiently large $m$ the theorem follows.
\end{proof}

\section{Examples}\label{examples}

We briefly review known Calabi-Yau threefolds with Picard number $2$ and infinite birational automorphism group. 
Examples of Calabi-Yau threefolds as in Assumption \ref{mainassumption} can be constructed similar as in \cite{Og14} as complete intersections on $\PP^3\times \PP^3$ using \cite{Y21}.
In \cite[Section 6.2]{LW21}, there is a further example of a Calabi-Yau threefold as in Assumption \ref{mainassumption} such that there exist birational involutions. A similar example of a Calabi-Yau threefold with Picard number two and infinite birational automorphism group but without any birational involution is given in \cite{HT18} (see also \cite[Section 6.1]{LW21}).

At the end we construct new examples of a Calabi-Yau threefold $X$ with Picard number two and infinite birational automorphism group such that there exist birational involutions.  Moreover, we adapt the original example of Oguiso \cite{Og14} to one with an automorphism group that acts non-trivially on the nef cone. This shows that the case of Proposition \ref{prop_NefInFundamentalAut} can indeed occur.
We make all computations explicit using the computer algebra program \emph{Macaulay2} \cite{macaulay2}.  All of our computations are available on the authors' homepage \cite{CY3foldM2}.

\begin{example}
	Recall that the example of \cite{Og14} is a Calabi-Yau threefold $X$which is a complete intersection of forms of bidegree $(1,1)$, $(1,1)$ and $(2,2)$ in $\PP^3 \times \PP^3$. The two resulting projections $\pi_i\colon X \rightarrow \PP^3$ are generically 2:1 and induce involutions $\tau_i \colon X \dashrightarrow X$. In the basis $h_1$ and $h_2$ with $h_i = \pi_i^* (H_{\PP^3})$, we have
	 $$ \tau_1^* = \begin{pmatrix}
	 1 & 6 \\ 0 & -1
	 \end{pmatrix}
	 \text{ and }
	 \tau_2^* = \begin{pmatrix}
	 -1 & 0 \\ 6 & 1
	 \end{pmatrix}.
	 $$ 
	 Let $x_i$ be the variables of the first $\PP^3$, and $y_j$ the variables of the second one. 
	 Now we choose the defining forms of $X$ so that the generated ideal is invariant under interchanging the variables $x_i$ and $y_i$. More precisely, we consider the homomorphism $g$ of the Cox ring of $\PP^3 \times \PP^3$ given by
	 $$ x_i \mapsto y_i, \  y_i \mapsto x_i.$$
	 Then we choose a generic $(1,1)$-form $f_0$ and a generic $(2,2)$-form $q$ and consider the ideal generated by $$f_0, g(f_0), q + g(q)$$ which is invariant under $g$. Thus, we obtain an automorphism $\alpha$ on the corresponding variety $X \subset \PP^3 \times \PP^3$ which interchanges  the variables $x_i$ and $y_i$. We refer to our ancillary \emph{Macaulay2}-file \cite{CY3foldM2} where we checked in an example that such a variety $X$ is a smooth Calabi-Yau threefold.
	 In the basis $h_1$ and $h_2$, we have 
	 $$ \alpha^* = \begin{pmatrix}
	 0& 1 \\ 1 & 0
	 \end{pmatrix}.
	 $$
	 Moreover, note that 
	 $$\tau_2 ^*= \alpha^* \tau_1^* \alpha^*.$$ 
	\end{example}	
	 
Next let us consider the product $\PP^3\times \PP^5$ with its two projections $\pi_1$ and $\pi_2$
$$
\xymatrix{
	&\PP^3\times \PP^5 \ar[ld]_{\pi_1} \ar[rd]^{\pi_2} & \\
	\PP^3 & & \PP^5.
}
$$  
We denote by $R = \mathbb{Q}[x_0,\dots,x_3,y_0,\dots,y_5]$ the bigraded Cox ring of $\PP^3\times \PP^5$ where $x_0,\dots,x_3$ and $y_0,\dots,y_5$ are the coordinate functions on $\PP^3$ and $\PP^5$, respectively. Let $H_1 = \pi_1^*(H_{\PP^3})$ and $H_2= \pi_2^*(H_{\PP^5})$ be the pullbacks of the hyperplane classes.  Let us consider the 5-fold $Z$ given by the $4\times 4$ Pfaffians of the following skew-symmetric matrix 
$$
M = \begin{pmatrix}
0 & x_0 &x_1 & x_2 & x_3 \\
-x_0 & 0 & y_0 & y_1 & y_2 \\
-x_1 & -y_0 & 0 & y_3 & y_4 \\
-x_2 & -y_1 & -y_3 & 0 & y_5 \\
-x_3 & -y_2 & -y_4 & -y_5 & 0 
\end{pmatrix},
$$
that is, the ideal of $Z$ is generated by
\begin{align*}
I_Z = \langle & {y}_{0}y_5 -{y}_{1}{y}_{4}+{y}_{2}{y}_{3
}, \\
& {x}_{1}y_5-{x}_{2}{y}_{4}+{x}_{3}{y}_{3},\ {x}_{0}y_5-{x}_{2}{y
}_{2}+{x}_{3}{y
}_{1}, \\
& {x}_{0}{y}_{4}-{x}_{1}{y
}_{2}+{x}_{3}{y}_{0},\ {x}_{0}{y
}_{3}-{x}_{1}{y}_{1}+{x}_{2}{y
}_{0}\rangle 
\end{align*}
whose syzygy matrix is $M$. 
The image of $Z$ in $\mathbb{P}^5$ by the second projection $\pi_2$ is  the Grassmannian $G(2,4)\subset \PP^5$ given by the Pl\"ucker equation 
$$y_0y_5  - y_1y_4 + y_2y_3.$$

One can check that $Z$ is smooth, and by the structure theorem of Buchsbaum--Eisenbud \cite[Theorem 2.1]{BE77},  the maximal Pfaffians of $M$ define a Gorenstein variety of codimension $3$. From the structure of the minimal bigraded free resolution of $Z$ (see \cite{CY3foldM2}),  the canonical bundle is $K_Z=(-2H_1 - 3H_2)|_Z$. 

Next we show that the first projection of $\PP^3\times \PP^5$ induces a $\PP^2$-bundle structure on $Z$. Indeed, the fiber of $\pi_1$ over a point $P\in \PP^3$ is given by four linear forms in $\PP^5$ and the quadric generator of $G(2,4)\subset \PP^5$. But, the four linear forms have exactly one constant syzygy, that is, the first row of $M$ evaluated at $P$.  Hence, the fiber over $P$ is at most a plane generated by three linear forms in $\PP^5$.  By semi-continuity of the fiber dimension it follows that the fiber is exactly a plane.  

Similarly one can show that the second projection $\pi_2$ of $Z$ to the Grassmannian $G(2,4)$ induces a $\PP^1$-bundle structure on $Z$.  
We note that the matrix $M$ evaluated at a point of the Grassmannian $G(2,4)\subset \PP^5$ generates four linear forms on $\PP^3$. But these linear forms have exactly two constant syzygies (since the lower-right $4\times 4$ matrix of $M$ has rank $2$ at every point of the Grassmannian), whence, generate a line in $\PP^3$.

\begin{remark}
 The variety $Z$ can be regarded as a bigraded version of a generic syzygy variety as described in \cite{ES12}.  Furthermore, from the bundle structures explained above, $Z$ can be seen either as  the projectivisation of the cotangent sheaf $\Omega_{\PP^3}$ on $\PP^3$ or as the projectivisation of the universal rank $2$ quotient bundle on the Grassmannian $G(2,4)$.
\end{remark}

Now, as $K_Z = (-2H_1 - 3H_2)|_Z$, intersecting $Z$ with two hypersurfaces whose bidegrees add up to $(2,3)$, we obtain in general a Calabi-Yau threefold. There are several possibilities for such pairs and in this article we study exemplary the case of a complete intersection of $Z$ with forms of bidegree $(0,1)$ and $(2,2)$ and one with forms of bidegree $(1,2)$ and $(1,1)$. 

\begin{example}[The case $(0,1)$ and $(2,2)$]\label{example1}
Intersecting $Z$ with a general hypersurface of bidegree $(0,1)$ gives  a ruled Fano $4$-fold $Y$ in $\PP^3 \times \PP^4$ of index $2$ with two $\PP^1$-bundle structures.  By \cite{Wis89} (see also \cite[Example 2 ($g=13$)]{mukai-biregularclassification}),  the ruled Fano $4$-fold $Y$ is unique and the two $\PP^1$-bundle structures are induced by the null-correlation bundle on $\PP^3$ (that is,  a stable rank $2$ bundle with $c_1=0$ and $c_2=1$) or a stable rank $2$ bundle on a three dimensional quadric $Q_3\subset \PP^4$ with $c_1=-1$ and $c_2=1$.  Our Calabi-Yau threefold $X$ is a smooth anticanonical divisor of $Y$ where two birational involutions are induced by the two $\PP^1$-bundle structures on $Y$.  The variety $Y\subset \PP^3\times \PP^4$ is a Fano $4$-fold of index $2$ since its canonical bundle is given as the restriction of $-2(H_1 +H_2)$ to $Y$ (where $(H_1+H_2)|_Y$ is an ample line bundle on $Y$). 
 We remark that the Segre embedding of $\PP^3\times \PP^4$ embeds $Y$ into $\PP^{15} = \PP (H^0(Y, (H_1+H_2)|_Y)^*)$, whence it is a Fano $4$-fold of genus $13$. 

By \cite[Theorem (0.1)]{Wis89}, the Fano $4$-fold $Y$ is unique as described above and has Picard number $2$ by \cite{SW90}.  Now,  a smooth intersection of $Y$ and a quadric defined by a form of bidegree $(2,2)$ on $\PP^3\times \PP^4$ is a Calabi-Yau threefold $X$ of Picard number $2$ with finite automorphism group by \cite[Theorem 3.1]{CO15}. In the following, we fix the two forms of bidegree $(0,1)$ and $(2,2)$:
\begin{align*}
&  y_0 + y_1 + y_2 +y_3 + y_4 - y_5, \\
& {x}_{0}{x}_{3}{y}_{0}{y}_{1}+{x}_{1}{x}_{2}{y}_{1}{y}_{2}-{x}_{2}^{2}{y}_{1}{y}_{2}+{x}_{0}{x}_{2}{y}_{2}{y}_{3}+{x
}_{2}^{2}{y}_{2}{y}_{3} \\
& +{x}_{1}{x}_{2}{y}_{0}{y}_{4}+{x}_{0}{x}_{1}{y}_{1}{y}_{4}+{x}_{0}{x}_{3}{y}_{1}{y}_{4}-{x}_{1
}{x}_{3}{y}_{1}{y}_{4}+{x}_{2}{x}_{3}{y}_{1}{y}_{4}  \\ & +2{x}_{1}{x}_{2}{y}_{2}{y}_{4}-{x}_{1}^{2}{y}_{3}{y}_{4}+2{x
}_{1}{x}_{2}{y}_{3}{y}_{4}+{x}_{0}^{2}{y}_{4}^{2}-{x}_{1}^{2}{y}_{4}^{2}+{x}_{2}^{2}{y}_{4}^{2}.
\end{align*}

We see that $X$ is generically $2:1$ to $\PP^3$ and $Q_3$, the corresponding quadric in $\PP^4$. In our ancillary \emph{Macaulay2}-file, we also check that there exists (at least) a line $L_i$ on $X$ for the projection $\pi_i$ ($i=1,2$) such that $\pi_i(L_i)$ is contracted to a point (for example the fibers of $Y$ over $(1:1:0:0) \in \mathbb{P}^3$ and $(0:0:0:0:1) \in Q_3$ are contained in $X$).  We denote by $\tau_i$ the birational involution induced by the projection $\pi_i$ for $i=1,2$.

We also denote by $h_1 = \pi_1^*(H_{\PP^3})|_X$ and $h_2= \pi_2^*(H_{\PP^4})|_X$ the pullbacks of the hyperplane classes restricted to $X$.  
Again, using \emph{Macaulay2}, we compute that 
$$ h_1^3 = 2, \ h_1^2h_2 = 6, \ h_1h_2^2 = 8 \text{ and } h_2^3 =2.$$
Next we compute the pullbacks of $h_i$ under the birational automorphisms $\tau_1$ and $\tau_2$. First note,  that we have $\tau_i^*h_i = h_i$ by construction. Similarly, as in \cite{Og14} we compute 
$$ t_1^*h_2 = 6h_1 - h_2 \text{ and } t_2^*h_1 = -h_1+8h_2.$$
Thus, with respect to the basis $h_1,h_2$, we have
$$ \tau_1^* = \begin{pmatrix}
1 & 6 \\ 0 & -1
\end{pmatrix}
\text{ and }
\tau_2^* = \begin{pmatrix}
-1 & 0 \\ 8 & 1
\end{pmatrix}.
$$
We then obtain a birational automorphism $\sigma = \tau_1\tau_2$ with matrix
$$ \sigma^* = \tau_2^*\tau_1^* = \begin{pmatrix}
-1 & -6 \\ 8 & 47
\end{pmatrix}.
$$
We see that $\sigma^*$ is of infinite order with eigenvalues $\lambda = 23 + 4\sqrt{33} > 1 $ (resp. $\lambda^{-1} = 23 -4\sqrt{33}$) corresponding to eigenvectors 
$ \frac{1}{2}(-6+\sqrt{33})h_1 + h_2$ (resp. $h_1 +  \frac{1}{2}(-6+\sqrt{33})h_2$). 

By Proposition \ref{prop_NefInFundamental} we see that the nef cone of $X$ spanned by $h_1$ and $h_2$ is a fundamental domain of the action of the birational automorphism group on the movable cone of $X$ spanned by the eigenvectors of $\sigma$. 

We conclude that we have found a Calabi-Yau threefold with Picard number $2$ and infinite birational automorphism group.  We could not find a description of this Calabi-Yau threefold in the literature.  
\end{example}

\begin{example}[The case $(1,2)$ and $(1,1)$]
Intersecting $Z$ with a general hypersurface of bidegree $(1,2)$ gives  a Fano $4$-fold $Y$ in $\PP^3 \times \PP^5$ of index $1$ where the first projection $\pi$  induces a conic-bundle structure on $Y$.  We compute that the restriction of $\pi_2$ to $Y$ is a divisorial contraction which contracts a divisor in $Y$ to a smooth surface in $G{(2,4)} \subseteq \PP^5$ of degree 13 and sectional genus 14. 

Now,  a smooth intersection of $Y$ with a form of bidegree $(1,1)$ on $\PP^3\times \PP^5$ is a Calabi-Yau threefold $X$ of Picard number $2$ with finite automorphism group by \cite[Theorem 3.1]{CO15}. 
In  our ancillary \emph{Macaulay2}-file, we check that the first projection $\pi_1$ to $\PP^3$ is $2:1$ and induces a birational involution as in the first example. The second projection $\pi_2$ maps $X$ birationally to a singular complete intersection $V_{2,4} \subseteq \PP^5$ of the Grassmannian $G(2,4)$ and a quartic hypersurface. 
The complete intersection $V_{2,4}$ is singular in $33$ double points, denoted by $\Sigma$, which lie on a unique Del Pezzo quintic surface $S$.  We may apply \cite[Lemma 2.14]{LW21} to $V_{2,4}$:
\begin{itemize}
 \item The projection $\pi_2$ of $X$ to $V_{2,4}$ is a small resolution and the restriction to $\pi_2^{-1}(S) \rightarrow S$ is the blow-up of $S$ in $\Sigma$. 
Hence, we get $33$ lines $\pi_2^{-1}(\Sigma)$ on $X$ which are contracted by $\pi_2$. Note that by our construction $X$ is isomorphic to the blow-up $\textrm{Bl}_S(V_{2,4})$ of $V_{2,4}$ along the surface $S$. 
\item There exists an Atiyah flop $X^+ \rightarrow V_{2,4}$ flopping the contracted curves $\pi_2^{-1}(\Sigma)$. This is obtained by the blow-up of $V_{2,4}$ in its singular points, denoted by $W= \textrm{Bl}_\Sigma(V_{2,4})$, and the following diagram 
$$
\xymatrix{
&\ \ \  \color{blue}{\ell_p\times \ell_p^+\cong \PP^1\times \PP^1} \ar@[blue]@{|->}[dl] & W = \textrm{Bl}_\Sigma(V_{2,4}) \ar[dr] \ar[dl]& \color{blue}{\ell_p\times \ell_p^+} \ar@[blue]@{|->}[dr]& \\
\color{blue}{\ell_p} \ar@[blue]@{|->}[dr] & X\cong \textrm{Bl}_S(V_{2,4}) \ar[dr]_{\pi_2} & & X^+ \ar[dl] \ar[dr]^{\varphi} & \color{blue}{\ell_p^+} \\
&\color{blue}{p\in \Sigma} & V_{2,4} & & X'
}
$$
where the morphism $W\to X^+$ contracts the strict transform of the $33$ lines $\pi_2^{-1}(\Sigma)$ on $W$. 
\item Furthermore, there is a smooth surface $S^+$ on $X^+$ isomorphic to $S$. 
\end{itemize}
Since $X$ and $X^{+}$ are isomorphic in codimension one, there exists another birational primitive contraction $\varphi$ as in the diagram above. Assuming that $\varphi$ is divisorial, we can conclude that $\Bir(X)$ is finite.
\end{example}

\begin{remark}
	The above constructions can be applied to other complete intersections of $Z$ 
producing more examples of Calabi-Yau varieties with different geometric behavior (see our \emph{Macaulay2}-file \cite{CY3foldM2}). For example, the intersection of $Z$ with two forms of bidegree $(2,1)$ and $(0,2)$ is similar to Example \ref{example1}. All of our examples are not in the list of Calabi-Yau threefolds in \cite{KK09} and \cite{Kap09}. 
\end{remark}

\begin{remark}
 All our computations will also be implemented in the new computer algebra program \emph{OSCAR} \cite{oscar}. 
\end{remark}

%\bibliographystyle{amsalpha}
%\bibliographystyle{amsplain}
%\bibliography{bibliography}

\end{document}